\documentclass[12pt]{amsart}
\usepackage[latin1]{inputenc}
\usepackage[english]{babel}
\usepackage{amsmath}
\usepackage{amsthm}
\usepackage{amssymb}
\usepackage{amsmath, amsthm, amsfonts, mathrsfs, amsfonts}
\usepackage{latexsym}
\usepackage{textcomp}
\usepackage{enumerate}
\usepackage{ulem}

\usepackage{xcolor}

\theoremstyle{definition}
\newtheorem{definition}{Definition}[section]
\newtheorem*{definition1}{Definition}
\newtheorem{question}{Question}[]

\theoremstyle{remark}

\theoremstyle{plain}

\theoremstyle{plain}

\theoremstyle{plain}

\theoremstyle{plain}
\newtheorem{lemma}[definition]{Lemma}

\theoremstyle{plain}

\newtheorem{prop}[definition]{Proposition}
\newtheorem{theoremA}[]{Theorem}

\newtheorem{corA}[theoremA]{Corollary}

\theoremstyle{remark}

\theoremstyle{remark}

\theoremstyle{definition}

\newcommand{\A}{\mathbf{A}}

\newcommand{\N}{\mathrm{N}}

\newcommand{\Syl}{\mathrm{Syl}}

\newcommand{\PSL}{\mathrm{PSL}}

\newcommand{\norm}{\mathrel{\unlhd}}

\def \Syl {\hbox {\rm Syl}}

\def \ov {\overline}

\oddsidemargin 0.1in \evensidemargin 0.1in \textwidth=6.1in
\textheight=8.5in \itemsep=0in
\parsep=0.1in

\title{$p$-nilpotency criteria for some verbal subgroups}

\author[Y. Contreras Rojas]{Yerko Contreras Rojas}\address{
Faculty of Mathematics -- Institute of exact sciences \\
Universidade Federal do Sul e Sudeste do Par\'a \\
Avenida dos Ip\^es, Cidade Universit\'aria, Marab\'a - Par\'a  \\ Brazil} \email{yerkocr@unifesspa.edu.br}

 \author[V. Grazian]{Valentina Grazian}\address{
Department of Mathematics and Applications\\
University of Milano -- Bicocca\\
Via Roberto Cozzi 55, 20125 Milano \\ Italy} \email{valentina.grazian@unimib.it}

\author[C. Monetta]{Carmine Monetta}\address{Department of Mathematics \\ University of Salerno \\
via Giovanni Paolo II 132, 84084 Fisciano (SA)\\ Italy}
\email{cmonetta@unisa.it}

\keywords{$p$-nilpotency, lower central word, derived word}
\subjclass[2010]{20D12, 20F15}

\begin{document}
\maketitle

\begin{abstract}
    Let $G$ be a finite group, let $p$ be a prime and let $w$ be a group-word.
    We say that $G$ satisfies $P(w,p)$ if the prime $p$ divides the order of $xy$ for every $w$-value $x$ in $G$ of $p'$-order and for every non-trivial $w$-value $y$ in $G$ of order divisible by $p$.
    With $k \geq 2$, we prove that the $k$th term of the lower central series of $G$ is $p$-nilpotent if and only if $G$ satisfies $P(\gamma_k,p)$.
    In addition, if $G$ is soluble, we show that the $k$th term of the derived series of $G$ is $p$-nilpotent if and only if $G$ satisfies $P(\delta_k,p)$.
\end{abstract}

\section{Introduction}

Let $G$ be a finite group and  let $p$ be a prime. We say that $G$ is $p$-nilpotent if $G$ has a normal $p$-complement, that is, a normal subgroup $H$ of $G$ having $p'$-order and $p$-power index in $G$.
There are well known results giving sufficient conditions  for the  $p$-nilpotecy of a finite group, such as Burniside's theorem \cite[Theorem 7.4.3]{GOR}, and remarkable criteria like Frobenius' normal $p$-complement theorem  \cite[Theorem 7.4.5]{GOR}, which says that a group $G$ is $p$-nilpotent if and only if $N_G(H)/C_G(H)$ is a $p$-group for every $p$-subgroup $H$ of $G$. These results mainly deal with properties of $p$-local subgroups and they motivated other works in the subject (see for instance \cite{Asaad} and \cite{BX}).
In recent years, the problems of nilpotency and $p$-nilpotency of a group have been studied from a different point of view.
In an unpublished work of 2014 \cite{BW}, Baumslag and Wiegold studied the nilpotency of a finite group looking at the behaviour of elements of coprime orders. 
Beltr\'an and S\'aez \cite{BS} took inspiration to characterize the $p$-nilpotency of a finite group by means of the orders of its elements:

\begin{theoremA}\label{BS}\cite[Theorem A]{BS}
Let $G$ be a finite group and let $p$ be a prime. Then $G$ is $p$-nilpotent if and only if for every $p'$-element $x$ of $G$ of prime power order and for every $p$-element $y\neq 1$ of $G$, $p$ divides $o(xy)$.
\end{theoremA}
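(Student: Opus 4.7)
The forward direction is immediate from the structure of a $p$-nilpotent group: if $G = K \rtimes P$ with $K$ the normal $p$-complement, then every $p'$-element of $G$ lies in $K$, so for any $p'$-element $x$ and any non-trivial $p$-element $y$ the coset $(xy)K = yK$ is a non-trivial element of the $p$-group $G/K$; hence $p$ divides $o(yK)$, which divides $o(xy)$.

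For the converse I would argue by contradiction: assume $G$ satisfies the order condition but is not $p$-nilpotent, and produce a pair $(x,y)$ violating the hypothesis. By Frobenius' normal $p$-complement theorem, recalled in the introduction, the failure of $p$-nilpotency gives a $p$-subgroup $H \leq G$ such that $N_G(H)/C_G(H)$ is not a $p$-group. This quotient therefore contains a non-trivial element of $p'$-order, and by taking a suitable primary component we may assume it has $q$-power order for some prime $q \neq p$. Lifting to $N_G(H)$ and extracting the $q$-part of the lift produces $x \in N_G(H)$ of $q$-power order with $x \notin C_G(H)$; in particular $x$ is a $p'$-element of prime power order that acts non-trivially on $H$.

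The decisive step is then a conjugacy count inside $L := H\langle x\rangle$. Since $|H|$ and $|x|$ are coprime, $L = H \rtimes \langle x\rangle$, and a direct computation shows $C_L(x) = C_H(x)\langle x\rangle$, so $|L : C_L(x)| = |H : C_H(x)| > 1$ because $x$ does not centralize $H$. As $L/H$ is cyclic, every conjugate of $x$ in $L$ lies in the coset $Hx$, so some element of the form $hx$ with $1 \neq h \in H$ is conjugate to $x$ in $L$. Then $o(hx) = o(x)$ is a $q$-power and in particular coprime to $p$; since $xh = x(hx)x^{-1}$ is conjugate to $hx$, also $o(xh)$ is coprime to $p$. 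Taking $y := h$, the pair $(x,y)$ satisfies the premises of the criterion but violates the conclusion $p \mid o(xy)$, giving the desired contradiction.

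The point that needs a little care is the prime-power hypothesis on $x$: one must verify that the $p'$-element provided by Frobenius can be chosen of prime power order. This is immediate from primary decomposition, since some $q \neq p$ divides $|N_G(H)/C_G(H)|$ and a non-trivial $q$-element of this quotient lifts to a $q$-element of $N_G(H) \setminus C_G(H)$ by passing to $q$-parts. Everything else is a clean semidirect-product argument; I do not expect deeper machinery such as coprime action or transfer to be needed.
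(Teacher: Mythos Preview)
The paper does not actually prove this statement: Theorem~A is quoted verbatim from Beltr\'an and S\'aez and used only as a black box (notably in the short derivation of Corollary~B in Section~2). So there is no in-paper proof to compare yours against.

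That said, your argument is correct and self-contained. The forward direction is the standard coset computation in $G/K$. For the converse, invoking Frobenius' criterion to obtain a $p$-subgroup $H$ with $N_G(H)/C_G(H)$ not a $p$-group, lifting a $q$-element ($q\neq p$) of that quotient to a $q$-element $x\in N_G(H)\setminus C_G(H)$ via primary decomposition, and then counting conjugates of $x$ inside $L=H\rtimes\langle x\rangle$ is a clean and elementary route. The identity $C_L(x)=C_H(x)\langle x\rangle$, hence $|L:C_L(x)|=|H:C_H(x)|>1$, together with the fact that every $L$-conjugate of $x$ lies in the coset $Hx$, forces some $hx$ with $1\neq h\in H$ to be conjugate to $x$; then $o(xh)=o(hx)=o(x)$ is coprime to $p$, and $(x,h)$ is the violating pair. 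Nothing is missing.
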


Here $o(x)$ denotes the order of the group element $x$. 
We point out that, instead of studying the couple of elements $(x,y)$, where $x$ is a $p'$-element of prime power order and $y$ is a non-trivial $p$-element, one can focus on the couple $(x,y)$, where $x$ is a $p'$-element and $y$ is a non-trivial element of order divisible by $p$.

\begin{corA}\label{thm.intro}
Let $G$ be a finite group and let $p$ be a prime. Then $G$ is $p$-nilpotent if and only if  for every $x\in G$ such that $p$ does not divide $o(x)$ and for every $1 \neq y\in G$ such that $p$ divides $o(y)$, $p$ divides $o(xy)$.
\end{corA}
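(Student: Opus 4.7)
The plan is to deduce Corollary A directly from Theorem A. The condition in Corollary A is formally stronger than the one in Theorem A: a $p'$-element of prime power order is in particular a $p'$-element, and a non-trivial $p$-element is in particular a non-trivial element of order divisible by $p$. Consequently, the ``if'' direction of Corollary A is immediate: whenever its hypothesis holds, the hypothesis of Theorem A is automatically satisfied, and we conclude that $G$ is $p$-nilpotent.

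For the converse, I would assume that $G$ is $p$-nilpotent with normal $p$-complement $H$, so that $G/H$ is a $p$-group. The key observation is that every $p'$-element of $G$ lies in $H$: if $g \in G$ has $p'$-order, then the coset $gH$ has order dividing both $o(g)$ (which is coprime to $p$) and $|G/H|$ (which is a power of $p$), whence $g \in H$. Now let $x \in G$ satisfy $p \nmid o(x)$ and let $y \neq 1$ satisfy $p \mid o(y)$. Write $y = y_{p'} y_p$ for the canonical decomposition of $y$ into its commuting $p'$-part and $p$-part; by hypothesis $y_p \neq 1$. Since $x, y_{p'} \in H$, in $G/H$ we have $(xy)H = y_p H$, and $y_p \notin H$ because a non-trivial element of $p$-power order cannot lie in a $p'$-group. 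Hence $(xy)H$ is a non-trivial element of the $p$-group $G/H$, so $p$ divides $o((xy)H)$, which divides $o(xy)$, giving $p \mid o(xy)$.

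There is no real obstacle here: the argument is essentially bookkeeping on top of Theorem A, with the only mild content being the decomposition $y = y_{p'} y_p$ and the fact that $p'$-elements of a $p$-nilpotent group are trapped inside the normal $p$-complement. The main conceptual point is simply that Corollary A is Theorem A repackaged with equivalent but more uniform hypotheses, so the only direction that needs an argument is the one from $p$-nilpotency to the product condition.
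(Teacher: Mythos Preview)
Your proof is correct and follows essentially the same approach as the paper: the ``if'' direction is immediate from Theorem~A, and the ``only if'' direction uses that every $p'$-element lies in the normal $p$-complement $H$ so that $xy\notin H$ and hence $p\mid o(xy)$. The paper's argument is marginally shorter in that it observes $y\notin H$ directly (since $p\mid o(y)$ while $|H|$ is coprime to $p$), making your decomposition $y=y_{p'}y_p$ unnecessary, but the idea is the same.
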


The work of Baumslag and Wiegold has been extended to the realm of group-words in a series of papers \cite{BM,Carmine.meta, BS1,Pavel,comm.p'}.
 In a similar matter, the aim of this work is to generalize Theorem \ref{BS} obtaining a verbal version.

A group-word is any nontrivial element of a free group $F$ on free generators $x_1,x_2,\dots$, that is, a product of finitely many $x_i$'s and their inverses. The elements of the commutator subgroup of $F$ are called commutator words. Let $w=w(x_1,\dots,x_k)$ be a group-word in the variables $x_1,\dots, x_k$. For any group $G$  and arbitrary $g_1,\dots, g_k\in G$, the elements of the form $w(g_1,\dots, g_k)$ are called $w$-values in $G$. We denote by $G_w$ the set of all $w$-values in $G$. The verbal subgroup of $G$ corresponding to $w$ is the subgroup $w(G)$ of $G$ generated by $G_w$.

\begin{definition1} Let $G$ be a group, let $p$ be a prime and let $w$ be a group-word. We say that $G$ satisfies $P(w,p)$ if the prime $p$ divides $o(xy)$, for every $x\in G_w$ of $p'$-order and for every non-trivial $y\in G_w$ of order divisible by $p$.
\end{definition1}

Let $w$ be a group-word and let $p$ be a prime. If $G$ is a finite group and $w(G)$ is $p$-nilpotent, then Corollary~\ref{thm.intro} implies that $G$ satisfies $P(w,p)$. An interesting question is the following:

\begin{question}\label{question}
If $G$ is a finite group satisfying $P(w,p)$, is  $w(G)$ $p$-nilpotent?
\end{question} 

In general the answer is negative. For instance, one may consider any non-abelian simple group $G$, say of exponent $e$, and the word $x^n$, where
$n$ is a divisor of $e$ such that $e/n$ is prime. If $p$ is a prime dividing the order of $G$, then $G$ satisfies $P(w,p)$, but $w(G) = G$ is not $p$-nilpotent.

Even in the case of commutator words we can find counterexamples. Indeed, if $G= Alt(5)$ is the alternating group of degree $5$ and $w$ is the word considered in \cite[Example 4.2]{comm.p'}, then $G_w$ consists of the identity and all products of two transpositions. In particular if $p\in \{2,3,5\}$ then $G$ satisfies $P(w,p)$, but $w(G) = G$ is a simple group and therefore not $p$-nilpotent.

However, if we consider the group-word $w=x$, Corollary~\ref{thm.intro} says exactly that $w(G)$ is $p$-nilpotent if and only if G satisfies $P(w,p)$. Actually, this situation is a particular instance of a more general result concerning the commutator word $\gamma_k$.

Given an integer $k\geq 1$, the word $\gamma_k = \gamma_k(x_1, \dots, x_k)$ is defined inductively by the formulae
\[ \gamma_1 = x_1, \quad \gamma_k = [\gamma_{k-1},x_k] = [x_1, \dots, x_k] ~ \text{ for } k\geq 2,\]
where $[x,y] = x^{-1}y^{-1}xy$, for any group elements $x$ and $y$.
Note that $\gamma_2(G) = G'$ is the derived subgroup of $G$ and in general the subgroup $\gamma_k(G)$ of $G$ corresponds to the $k$-th term of the lower central series of $G$.

Our first main theorem is the following:

\begin{theoremA}\label{thm.gamma}
Let $G$ be a finite group, let $k\geq 1$ be an integer and let $p$ be a prime. Then $\gamma_k(G)$ is $p$-nilpotent if and only if $G$ satisfies $P(\gamma_k,p)$.
\end{theoremA}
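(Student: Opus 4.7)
The proof splits into two directions; the ``only if'' direction is essentially immediate. I would first observe that if $\gamma_k(G)$ is $p$-nilpotent, then for any $x, y \in G_{\gamma_k}$ satisfying the hypotheses of $P(\gamma_k, p)$, both $x$ and $y$ lie in $\gamma_k(G)$, so Corollary~\ref{thm.intro} applied directly to the $p$-nilpotent group $\gamma_k(G)$ yields $p \mid o(xy)$.

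For the substantive direction, my plan is induction on $|G|$, working with a minimal counterexample $G$ that satisfies $P(\gamma_k, p)$ but has $\gamma_k(G)$ not $p$-nilpotent. The first reduction is a quotient argument by a normal $p'$-subgroup $N \trianglelefteq G$. I would show that $G/N$ inherits the hypothesis: any preimage in $G$ of a $p'$-order $\gamma_k$-value in $G/N$ is itself a $p'$-element of $G$ (since $N$ is $p'$, so $o(x)$ divides $o(\bar x)\cdot|N|$), and the $p$-divisibility of $o(xy)$ descends to $G/N$ precisely because $N$ carries no nontrivial $p$-torsion. The induction hypothesis then forces $\gamma_k(G/N) = \gamma_k(G)N/N$ to be $p$-nilpotent; since $N$ is a $p'$-subgroup, a standard extension argument makes $\gamma_k(G)N$ $p$-nilpotent, and hence so is its subgroup $\gamma_k(G)$---a contradiction. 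Thus $O_{p'}(G) = 1$, and since $\gamma_k(G)\trianglelefteq G$ and $O_{p'}(\gamma_k(G))$ is characteristic in $\gamma_k(G)$, also $O_{p'}(\gamma_k(G)) = 1$. So proving $\gamma_k(G)$ is $p$-nilpotent reduces to proving it is a $p$-group.

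The remaining task---and the core of the proof---is then to rule out the possibility that $\gamma_k(G)$ contains any $p'$-elements. My strategy is to examine a minimal normal subgroup $M$ of $G$ contained in $\gamma_k(G)$. Since $O_{p'}(G) = 1$, such an $M$ is either an elementary abelian $p$-group or a direct product of isomorphic non-abelian finite simple groups whose common order is divisible by $p$. In the non-abelian simple case, I would invoke a strong Ore-type theorem (every element of a non-abelian finite simple group is a $\gamma_k$-value, for $k \geq 2$) to conclude $M \subseteq G_{\gamma_k}$; exhibiting a $p'$-element $x \in M$ and a nontrivial $p$-element $y \in M$ with $p \nmid o(xy)$---elementary in any non-abelian simple group of order divisible by $p$---then contradicts $P(\gamma_k, p)$. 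In the elementary abelian $p$ case, I would attempt to pass to $G/M$ and iterate the induction.

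The main obstacle is precisely the analysis of these two cases. For the non-abelian case, the Ore-type statement for $\gamma_k$-values relies on deep, classification-dependent results on word maps in finite simple groups. For the elementary abelian $p$ case, the quotient reduction is subtler than its $p'$ analogue, since a lift of a $p'$-element through a $p$-subgroup need not remain a $p'$-element, so the clean ``lifting of hypothesis'' argument used above fails; a separate argument---perhaps via the Fitting subgroup, or an ad hoc local analysis invoking Frobenius' normal $p$-complement theorem together with $P(\gamma_k, p)$---seems necessary to close this branch of the argument.
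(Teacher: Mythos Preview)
Your opening moves---the easy direction via Corollary~\ref{thm.intro} and the reduction to $O_{p'}(G)=1$ in a minimal counterexample---match the paper. Your treatment of a non-abelian minimal normal subgroup is also essentially sound and parallels part of the paper's argument (the paper phrases this as: if $G=G'$ then $G$ is quasisimple, and one finishes with Frobenius' criterion).

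The elementary abelian $p$ case, however, is a genuine gap, and the quotient-by-$M$ route is a dead end, as you yourself suspect: $P(\gamma_k,p)$ need not pass to $G/M$ when $M$ is a $p$-group, and there is no way to repair this by lifting elements. The paper never quotients by a $p$-group. The decisive ingredient you are missing is a local lemma: if $x\in G_{\gamma_k}$ has $p'$-order and normalizes a $p$-subgroup $P$, then $[P,x]=1$. The proof is a one-line identity---$[g,_{k-1}x]\cdot x^{-1}=x^{-[g,_{k-2}x]}$---together with $P(\gamma_k,p)$ shows $[g,_{k-1}x]$ has $p'$-order, hence is trivial when $g\in P$; coprime action then gives $[P,x]=1$. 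With this lemma the paper argues: (i) $G>G'$, for otherwise $G$ is quasisimple with $Z(G)=O_p(G)$, every element is a $\gamma_k$-value, and the local lemma plus Frobenius' criterion forces $G$ to be $p$-nilpotent; (ii) minimality applied to $G'$ makes $\gamma_k(G')$ $p$-nilpotent, hence a $p$-group since $O_{p'}(G)=1$, so $G^{(k)}\le\gamma_k(G')$ is a $p$-group and $G$ is soluble; (iii) in the soluble case $\gamma_k(G)$ has a normal, self-centralizing Sylow $p$-subgroup $P$, and the local lemma forces every $p'$-order $\gamma_k$-value to centralize $P$ and hence lie in $P$---so there are none, and (using that in a soluble group $\gamma_k(G)$ is generated by $\gamma_k$-values of prime power order) $\gamma_k(G)$ is a $p$-group. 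Your gestures toward ``Fitting subgroup'' and ``Frobenius'' point in the right direction, but neither can be engaged without the local lemma.
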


It is worth mentioning that Theorem \ref{thm.gamma} gives a positive answer to Question \ref{question} for $\gamma_k$-words. Such words belong to the larger class of multilinear commutator words, which are words obtained by nesting commutators but using always different variables. For example the word $[[x_1,x_2],[x_3,x_4,x_5],x_6]$ is a multilinear commutator, while the Engel word $[x_1,x_2,x_2]$ is not. Hence, it is natural to ask if the answer to Question \ref{question} remains positive if $G$ is any finite group and the word considered is any multilinear commutator word. In this direction, we provide another positive answer when $w$ belongs to the family of $\delta_k$-words. Given an integer $k\geq 0$, the word $\delta_k = \delta_k(x_1, \dots, x_{2^k})$ is defined inductively by the formulae
\[ \delta_0 = x_1, \quad \delta_k = [\delta_{k-1}(x_1, \dots, x_{2^{k-1}}),\delta_{k-1}(x_{2^{k-1}+1}, \dots, x_{2^k})] ~ \text{ for } k\geq 1.\]
We have $\delta_1(G)= G'$ and $\delta_k(G)$ corresponds to the $k$-th term  $G^{(k)}$ of the derived series of $G$.
Since $\delta_h = \gamma_{h+1}$ for $0 \leq h \leq 1$, Theorem \ref{thm.gamma} gives an affirmative answer to Question~\ref{question} when $w=\delta_0, \delta_1$. 
For $w=\delta_k$ with $k \geq 2$, we prove the following: 

\begin{theoremA}\label{thm.delta.sol}
Let $G$ be a finite soluble group, let $k\geq 2$ be an integer and let $p$ be a prime. Then $G^{(k)}$ is $p$-nilpotent if and only if $G$ satisfies $P(\delta_k,p)$.
\end{theoremA}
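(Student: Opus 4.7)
The ``only if'' direction is immediate: if $G^{(k)}$ is $p$-nilpotent, then Corollary~\ref{thm.intro} applied inside $G^{(k)}$ yields $P(\delta_k,p)$ for $G$, since every $\delta_k$-value of $G$ lies in $G^{(k)}$.

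For the ``if'' direction I would proceed by minimal counterexample: let $G$ be a soluble group of least order satisfying $P(\delta_k,p)$ but with $G^{(k)}$ not $p$-nilpotent. Two standard reductions come first. First, $P(\delta_k,p)$ passes to subgroups, because every $\delta_k$-value of $H\le G$ is a $\delta_k$-value of $G$; hence by minimality every proper subgroup $H<G$ has $H^{(k)}$ $p$-nilpotent. Second, $O_{p'}(G)=1$: if $N\trianglelefteq G$ is a non-trivial $p'$-subgroup, then $P(\delta_k,p)$ passes to $G/N$, because lifts of $\delta_k$-values are $\delta_k$-values and, since $N$ is a $p'$-group, the $p$-part of $o(g)$ equals that of $o(gN)$ for every $g\in G$, so being a $p'$-element and having $p$-divisible order are preserved upon quotienting. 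By minimality, $(G/N)^{(k)}=G^{(k)}N/N$ is $p$-nilpotent, and since $G^{(k)}\cap N$ is a $p'$-group, the preimage in $G^{(k)}$ of a normal $p$-complement of $G^{(k)}N/N$ is a normal $p'$-subgroup of $p$-power index in $G^{(k)}$, contradicting our assumption. Solubility together with $O_{p'}(G)=1$ forces $F(G)=O_p(G)\ne 1$ and $C_G(F(G))\le F(G)$.

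The core of the argument is then to apply Theorem~\ref{thm.gamma} with $k=2$ to the subgroup $K:=G^{(k-1)}$, using that $\gamma_2(K)=K'=G^{(k)}$. If $K$ satisfies $P(\gamma_2,p)$, Theorem~\ref{thm.gamma} gives $G^{(k)}$ $p$-nilpotent, a contradiction. Now every $\delta_k$-value of $G$ is, by definition, a commutator of two $\delta_{k-1}$-values of $G$, hence a particular $\gamma_2$-value of $K$; the task is therefore to upgrade $P(\delta_k,p)$ (on commutators whose arguments are $\delta_{k-1}$-values of $G$) to $P(\gamma_2,p)$ for $K$ (on all commutators $[h_1,h_2]$ with $h_i\in K$).

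This transfer is the main obstacle, because elements of $K=G^{(k-1)}$ are in general only products of $\delta_{k-1}$-values and not single ones. To close the gap I would use solubility essentially: combining Hall $p'$-subgroups, the constraint $C_G(F(G))\le F(G)$, the $p$-nilpotency of $H^{(k)}$ for every proper $H<G$, and the $G$-conjugacy closure of the sets of $\delta_{k-1}$- and $\delta_k$-values, the idea is to reduce an arbitrary commutator in $K$ to one coming from a pair of $\delta_{k-1}$-value representatives modulo $F(G)$, apply $P(\delta_k,p)$ to the resulting $\delta_k$-value, and then transfer the divisibility conclusion back via a coprime-action argument of Shumyatsky type. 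Once $P(\gamma_2,p)$ for $K$ is established, Theorem~\ref{thm.gamma} completes the proof.
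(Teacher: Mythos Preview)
Your proposal has a genuine gap at precisely the point you yourself flag as ``the main obstacle'': upgrading $P(\delta_k,p)$ for $G$ to $P(\gamma_2,p)$ for $K=G^{(k-1)}$. The sketch you give does not close it. First, an arbitrary element of $K$ need not be congruent modulo $F(G)=O_p(G)$ to a single $\delta_{k-1}$-value, so the proposed choice of ``$\delta_{k-1}$-value representatives modulo $F(G)$'' is not generally available. Second, even granting such representatives, the relation between $[h_1,h_2]$ and $[h_1',h_2']$ when $h_i\equiv h_i'\pmod{O_p(G)}$ does not transport the order-divisibility conditions defining $P(\gamma_2,p)$: those conditions concern products $xy$ of two commutators and the $p$-parts of their orders, and modifying the entries by $p$-elements can alter all of the relevant orders. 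No off-the-shelf coprime-action lemma repairs this, and as written the last paragraph is a promissory note rather than a proof.

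The paper bypasses this obstacle entirely and does not invoke Theorem~\ref{thm.gamma}. After the same reductions ($O_{p'}(G)=1$, hence $Fit_p(G)=O_p(G)$), it applies minimality to the proper subgroup $G'$ to conclude that $G^{(k+1)}=\delta_k(G')$ is $p$-nilpotent and therefore a $p$-group; this forces a Sylow $p$-subgroup $P$ of $G^{(k)}$ to be normal in $G^{(k)}$. Schur--Zassenhaus then yields a $p'$-complement $H$, and the decisive ingredient is Lemma~\ref{delta.focal}: for each prime $q\ne p$, the Sylow $q$-subgroup $Q\cap G^{(k)}$ is generated by $\delta_k$-values of $G$ lying in $Q$. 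Thus $H$ is generated by $\delta_k$-values of $p'$-order, and a short computation directly from $P(\delta_k,p)$ (Lemmas~\ref{p'.word.delta} and~\ref{p.sbg.word.delta}) shows that any such value normalising $P$ must centralise it. Hence $[P,H]=1$, so $H$ is a normal $p$-complement of $G^{(k)}$, the desired contradiction. The point is that one never needs control over arbitrary commutators in $G^{(k-1)}$, only over $\delta_k$-values of $p'$-order; Lemma~\ref{delta.focal} is exactly what supplies enough of those.
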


\section{Preliminaries}
Let $G$ be a finite group and let $p$ be a prime. We denote by $O_p(G)$ the largest normal $p$-subgroup of $G$ and by $O_{p'}(G)$ the largest normal $p'$-subgroup of $G$.
If $G$ is $p$-nilpotent, then the normal $p$-complement of $G$ is unique and corresponds to the group $O_{p'}(G)$. In particular we can write $G=PO_{p'}(G)$ for some Sylow $p$-subgroup $P$ of $G$. Moreover, if $G$ is $p$-nilpotent then $O_{p'}(G)$ contains all elements of $G$ having $p'$-order. Now, Corollary \ref{thm.intro} is a direct consequence of Theorem \ref{BS} and basic properties of $p$-nilpotent groups.

\begin{proof}[Proof of Corollary \ref{thm.intro}]
 Suppose $G$ is $p$-nilpotent. Then it has a normal $p$-complement $H$. Let $x,y \in G$ be such that $p$ does not divide  $o(x)$ and $p$ divides $o(y)$, with $y \neq 1$. Then $x \in H$ and $xy \not\in H$ as $y\not\in H$. Hence $p$ divides $o(xy)$.
 The other implication follows immediately from Theorem \ref{BS}.
\end{proof}

If $G$ is a group, we denote by $F(G)$ the Fitting subgroup of $G$, that is the largest normal nilpotent subgroup of $G$, and by $Fit_p(G)$ the $p$-Fitting subgroup of $G$, that is the largest normal $p$-nilpotent subgroup of $G$. Note that $F(G) \leq Fit_p(G)$.

\begin{lemma}\label{fitp.eq.Op}
Let $G$ be a finite group and let $p$ be a prime. If $O_{p'}(G) = 1$ then $Fit_p(G) = O_p(G)$ is a $p$-group. 
\end{lemma}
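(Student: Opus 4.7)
The plan is to show both inclusions $O_p(G) \leq Fit_p(G)$ and $Fit_p(G) \leq O_p(G)$, with the key input being that a normal $p$-complement of a characteristic subgroup gives back a normal $p'$-subgroup of $G$.

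First I would note that $O_p(G)$ is itself normal in $G$ and, being a $p$-group, is trivially $p$-nilpotent (its normal $p$-complement is the identity subgroup). Hence, by maximality of $Fit_p(G)$ among normal $p$-nilpotent subgroups of $G$, one has $O_p(G) \leq Fit_p(G)$.

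For the reverse inclusion, I would use that $Fit_p(G)$ is normal and $p$-nilpotent. Let $H := O_{p'}(Fit_p(G))$ denote its unique normal $p$-complement. Since $H$ is characteristic in $Fit_p(G)$ and $Fit_p(G)$ is normal in $G$, the subgroup $H$ is normal in $G$. As $H$ has $p'$-order, it follows that $H \leq O_{p'}(G) = 1$, so $H = 1$. Consequently $Fit_p(G) \cong Fit_p(G)/O_{p'}(Fit_p(G))$ is a $p$-group, and by normality $Fit_p(G) \leq O_p(G)$.

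Combining the two inclusions yields $Fit_p(G) = O_p(G)$, which is a $p$-group. The argument is essentially a single application of characteristicity of $O_{p'}(N)$ inside a normal subgroup $N$, so there is no real obstacle; the only point to be careful about is to recall that in a $p$-nilpotent group the normal $p$-complement is unique and coincides with $O_{p'}$, which was already recorded at the beginning of the Preliminaries section.
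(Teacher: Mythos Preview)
Your proof is correct and follows essentially the same route as the paper: both arguments show $O_{p'}(Fit_p(G))\leq O_{p'}(G)=1$ (you justify this via characteristicity, the paper states it directly), deduce that $Fit_p(G)$ is a $p$-group and hence contained in $O_p(G)$, and then note the reverse inclusion is immediate since $O_p(G)$ is a normal $p$-nilpotent subgroup. The only cosmetic difference is the order in which you establish the two inclusions.
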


\begin{proof}
We have $O_{p'}(Fit_p(G)) \leq O_{p'}(G) = 1$. Since $Fit_p(G)$ is $p$-nilpotent, we also have $Fit_p(G)=TO_{p'}(Fit_p(G))$ for some Sylow $p$-subgroup $T$ of $Fit_p(G)$. Hence $Fit_p(G)=T$ is a normal $p$-subgroup of $G$. Thus $Fit_p(G) \leq O_p(G)$. On the other hand, $O_p(G)$ is a normal $p$-nilpotent subgroup of $G$, and so we conclude that $Fit_p(G) = O_p(G)$.
\end{proof}

A group $G$ is said to be metanilpotent if it has a normal subgroup $N$ such that both $N$ and $G/N$ are nilpotent.

\begin{lemma}\label{meta}\cite[Lemma 3]{Carmine.meta} Let $p$ be a prime and $G$ a finite metanilpotent group.  Suppose that $x$ is a $p$-element in $G$ such that $[O_{p'}(F(G)),x]=1$. Then $x\in F(G)$.
\end{lemma}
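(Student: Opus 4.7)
The plan is to prove $x \in O_p(G)$; since $O_p(G) \leq F(G)$, this will suffice. First I would set up notation: as $F(G)$ is nilpotent, write $F(G) = P \times Q$ with $P = O_p(F(G))$ and $Q = O_{p'}(F(G))$. Both factors are characteristic in $F(G)$ and hence normal in $G$, and it is routine to verify $P = O_p(G)$ and $Q = O_{p'}(G)$; so the hypothesis reads $[Q,x]=1$ and the goal becomes $x \in P$.

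To locate $x$ I would exploit metanilpotency. Because $G/F(G)$ is nilpotent, the preimage $K$ of its Sylow $p$-subgroup is normal in $G$, the quotient $K/F(G)$ is a $p$-group, and $x \in K$ since $xF(G)$ is a $p$-element of a nilpotent group and hence lies in its unique Sylow $p$-subgroup. Stacking $K/F(G)$ on top of $F(G)/Q \cong P$ shows that $K/Q$ is itself a $p$-group.

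The heart of the argument is to study $C := C_K(Q)$. This subgroup is normal in $G$ as the intersection $C_G(Q) \cap K$ of two normal subgroups, it contains $x$ by hypothesis, and it contains $P$ because $P$ and $Q$ commute inside $F(G) = P \times Q$. The isomorphism $C/Z(Q) \cong CQ/Q \leq K/Q$ exhibits $C/Z(Q)$ as a $p$-group, while $Z(Q) \leq Z(C)$ because every element of $C$ centralizes all of $Q$. Therefore $C$ is a central extension of a $p$-group by the $p'$-group $Z(Q)$, which forces $C$ to be nilpotent and to split as $C = T \times Z(Q)$ with $T$ its unique Sylow $p$-subgroup. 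Then $T$ is characteristic in $C$ and hence normal in $G$, so $T \leq O_p(G) = P$; conversely $P \leq C$ is a $p$-group, giving $P \leq T$. Thus $T = P$ absorbs every $p$-element of $C$, in particular $x$.

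The main obstacle is isolating the subgroup $C_K(Q)$ and establishing its nilpotency; this is what funnels the $p$-element $x$ into $O_p(G)$. Metanilpotency is consumed at exactly one point, namely to guarantee that $K/Q$ (and therefore $C/Z(Q)$) is a $p$-group, without which the central-extension argument breaks down.
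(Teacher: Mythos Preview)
Your argument is correct. The paper does not actually prove this lemma; it merely cites it as \cite[Lemma 3]{Carmine.meta}, so there is no in-paper proof to compare against. For the record, each step of your proposal checks out: the identifications $O_p(F(G))=O_p(G)$ and $O_{p'}(F(G))=O_{p'}(G)$ are standard, the preimage $K$ of the Sylow $p$-subgroup of the nilpotent quotient $G/F(G)$ indeed contains $x$ and satisfies that $K/Q$ is a $p$-group, and the analysis of $C=C_K(Q)$ is sound (note $Q\leq F(G)\leq K$, so $C\cap Q=Z(Q)$ and $CQ/Q\leq K/Q$ gives $C/Z(Q)$ a $p$-group, while $Z(Q)\leq Z(C)$ forces $C$ nilpotent with Sylow $p$-subgroup equal to $O_p(G)$). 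This is a clean self-contained proof of the cited lemma.
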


We recall that a subset $X$ of a group $G$ is said to be commutator-closed if $[X,X] \subseteq X$, and symmetric if $X^{-1}=X$. 
In \cite[Lemma 2.1]{Pavel}, it has been showed that a finite soluble group $G$ admits a commutator-closed subset $X$ such that $G=\langle X \rangle$ and every element of $X$ has prime power order. From the proof, it is easy to check that such an $X$ is also symmetric. Hence we have the following. 
\begin{lemma}\label{Pavel.sol}
Let $G$ be a finite group. If $G$ is soluble then there exists a commutator-closed and symmetric subset $X$ of $G$ such that $G=\langle X \rangle$ and every element of $X$ has prime power order.
\end{lemma}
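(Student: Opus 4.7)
My plan is to invoke \cite[Lemma 2.1]{Pavel}, which already establishes the existence of a commutator-closed subset $X$ of $G$ that generates $G$ and consists of prime-power-order elements; my remaining contribution is then to verify, by inspecting the construction in \cite{Pavel}, that $X$ may be chosen symmetric as well.

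I would follow the inductive construction on $|G|$ carried out in \cite{Pavel}. Take $N$ to be a minimal normal subgroup of $G$; by solubility, $N$ is elementary abelian of exponent $p$ for some prime $p$. Applying the inductive hypothesis to the smaller group $G/N$ produces a commutator-closed symmetric set $\bar X \subseteq G/N$ of prime-power-order generators. For each $\bar y \in \bar X$ I would choose a prime-power-order lift $y \in G$: if $\bar y$ has $p$-power order, any preimage $h$ satisfies $h^{p^k}\in N$ and hence $h$ is itself a $p$-element; if $\bar y$ has $q$-power order with $q\neq p$, then $N$ is a normal $p$-subgroup of $\langle h, N\rangle$ of coprime index $q^k$, so Schur--Zassenhaus furnishes a $q$-power-order complement whose generator, after possibly replacing by a suitable power, is a $q$-power-order lift of $\bar y$.

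Writing $Y$ for the set of chosen lifts, I would then set $X := N \cup \bigcup_{y \in Y}\langle y\rangle$. This set is manifestly symmetric (it is a union of subgroups), consists of prime-power-order elements, and generates $G$ because $\langle X\rangle$ contains $N$ and surjects onto $G/N = \langle \bar X\rangle$. The main obstacle I expect is verifying $[X, X] \subseteq X$. Commutators within $N$ vanish; commutators of $\langle y\rangle$-elements with $N$ lie in $N$ by normality of $N$; and commutators $[y_1^a, y_2^b]$ between two cyclic summands project modulo $N$ to $[\bar y_1^a, \bar y_2^b] \in \bar X$ by the inductive commutator-closure, though a priori the commutator itself only lies in the corresponding coset of $N$. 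Ensuring that this commutator lies in $X$ exactly is the technical heart of the argument and is precisely what \cite[Lemma 2.1]{Pavel} arranges, via a careful choice of lifts exploiting the decomposition $N = C_N(y) \oplus [N, y]$ for each $y$ of order coprime to $p$. The symmetric enhancement proposed here---enlarging each lift $y$ to its cyclic subgroup $\langle y\rangle$---does not interfere with that argument, as powers of $y$ normalize $N$ in the same way as $y$ and produce commutators with the same images modulo $N$.
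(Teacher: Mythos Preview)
Your proposal takes essentially the same approach as the paper: both invoke \cite[Lemma 2.1]{Pavel} for the commutator-closed generating set of prime-power-order elements and then argue that symmetry can be read off from that construction. The paper is in fact far more terse than you are---it simply asserts, in the sentence preceding the lemma, that ``from the proof, it is easy to check that such an $X$ is also symmetric,'' without rehearsing the induction at all.

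One minor point of divergence worth flagging: the paper claims that the set $X$ built in \cite{Pavel} is \emph{already} symmetric as constructed, whereas you propose enlarging each chosen lift $y$ to the full cyclic group $\langle y\rangle$ in order to force symmetry. Your enlargement is harmless for generation and for the prime-power-order condition, but it does mean that in step~(3) of the commutator-closure check you must handle $[y_1^a,y_2^b]$ for arbitrary powers $a,b$, not just $[y_1,y_2]$. For that to go through cleanly you really want the inductive hypothesis on $\bar X$ to be ``closed under powers'' (e.g.\ a union of cyclic subgroups), not merely ``symmetric''; otherwise $\bar y_1^{\,a}$ need not lie in $\bar X$ and you cannot invoke commutator-closure of $\bar X$ on it. This is easy to arrange by strengthening the induction, but you should state it explicitly. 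The paper sidesteps this entirely by not modifying the set and simply observing that the original construction in \cite{Pavel} already produces a symmetric $X$.
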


Next results give sufficient conditions for a group to be generated by certain $w$-values of prime power order.

\begin{lemma}\label{Pavel.general}
Let $G$ be a finite soluble group. Then for every $k\geq 2$ the group $\gamma_k(G)$ is generated by $\gamma_k$-values of prime power order.
\end{lemma}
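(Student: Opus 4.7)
The plan is to combine Lemma~\ref{Pavel.sol} with standard commutator calculus. First, I would apply Lemma~\ref{Pavel.sol} to obtain a symmetric, commutator-closed subset $X$ of $G$ such that $G = \langle X \rangle$ and every element of $X$ has prime power order.

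Second, I would observe that because $X$ is commutator-closed, a simple induction on $k$ gives $[x_1,\dots,x_k] \in X$ whenever $x_1,\dots,x_k\in X$. In particular, each such iterated commutator is a $\gamma_k$-value of prime power order. Since conjugation distributes over commutators, i.e.\ $[x_1,\dots,x_k]^g = [x_1^g,\dots,x_k^g]$, every $G$-conjugate of such an iterated commutator is again a $\gamma_k$-value, and conjugation preserves the order, so these conjugates have prime power order as well.

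The core step is then to prove that the subgroup
\[ H = \langle [x_1,\dots,x_k]^g : x_1,\dots,x_k\in X,\ g\in G \rangle \]
coincides with $\gamma_k(G)$. The inclusion $H \leq \gamma_k(G)$ is immediate, and $H$ is normal in $G$ by construction. To show the reverse inclusion, I would pass to the quotient $\bar G = G/H$. In $\bar G$ the image $\bar X$ still generates $\bar G$ and satisfies $[\bar x_1,\dots,\bar x_k]=1$ for all $x_i\in X$. Using the commutator identities
\[ [ab,c] = [a,c]^b[b,c], \qquad [a,bc] = [a,c][a,b]^c, \]
together with $[a^{-1},c] = ([a,c]^{-1})^{a^{-1}}$ and the symmetry of $X$, one shows by induction on the word length (in the $\bar X$-alphabet) of the arguments that every $k$-fold commutator in $\bar G$ is trivial. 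Hence $\gamma_k(\bar G)=1$, i.e.\ $\gamma_k(G)\leq H$, as required.

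The only step that needs genuine care — rather than being purely conceptual — is the last one: carrying out the commutator-identity expansion to show $\gamma_k(\bar G)=1$ from the vanishing of commutators on the generating set $\bar X$. This is a classical technique but involves some bookkeeping for larger $k$; I would probably invoke the standard fact that $\gamma_k(G)$ is normally generated by the $k$-fold commutators of any symmetric generating set of $G$, which encapsulates exactly this computation.
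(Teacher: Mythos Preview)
Your argument is correct and follows the same route as the paper: start from Lemma~\ref{Pavel.sol}, use that $X$ is commutator-closed to see that iterated commutators of elements of $X$ lie in $X$ and hence have prime power order, and conclude that these generate $\gamma_k(G)$. The only difference is packaging: the paper invokes \cite[Lemma~3.6(c)]{KHU1} to obtain directly that $\gamma_k(G)=\langle [x_1,\dots,x_t]\mid x_i\in X,\ t\ge k\rangle$, whereas you re-derive the equivalent statement (that $\gamma_k(G)$ is the normal closure of the $k$-fold commutators in a symmetric generating set) via commutator identities.
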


\begin{proof}
By Lemma \ref{Pavel.sol} there exists a commutator-closed and symmetric subset $X$ of $G$ such that $G=\langle X \rangle$ and every element of $X$ has prime power order. Hence by  \cite[Lemma 3.6 item (c)]{KHU1}
we get $\gamma_k(G) = \langle [x_1, \dots, x_t] \mid x_i \in X \cup X^{-1}, t\geq k\rangle$. Note that $X \cup X^{-1}=X$, and $[x_1, \dots, x_t]$ is a $\gamma_k$-value for every $t\geq k$. Also, since $X$ is commutator-closed, we get that  $[x_1, \dots, x_t] \in X$ has prime power order. Thus, $\gamma_k(G)$ is generated by $\gamma_k$-values of prime power order.
\end{proof}

\begin{lemma}\label{lem:gengamma}\cite[Lemma 4]{Carmine.meta}
Let $k$ be a positive integer and $G$ a finite group such that $G=G'$. If
$p$ is a prime dividing the order of $G$, then $G$ is generated by $\gamma_k$-values of $q$-power order for primes $q \neq p$.
\end{lemma}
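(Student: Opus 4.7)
The plan is to define $H$ as the subgroup generated by the $\gamma_k$-values in $G$ of $q$-power order for primes $q\neq p$, observe that $H$ is normal in $G$ (since conjugation preserves both the property of being a $\gamma_k$-value and the order of an element), and then argue that $H=G$. Since $G/H$ is a quotient of a perfect group it is itself perfect, and a non-trivial perfect group cannot be a $p$-group (its abelianization would be a non-trivial $p$-group). Hence it suffices to show that $G/H$ is a $p$-group.

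To control $G/H$, I would first record two consequences of $G=G'$. Induction on $k$ gives $\gamma_{k+1}(G)=[G,\gamma_k(G)]=[G,G]=G$, so $\gamma_k(G)=G$ for every $k\geq 1$ and $G$ is generated by $\gamma_k$-values. Also, the subgroup $O^p(G)$ generated by the elements of $G$ of order coprime to $p$ is normal and $G/O^p(G)$ is a perfect $p$-group, hence trivial, so $G=O^p(G)$. Any element of order coprime to $p$ is a product of its commuting prime-power parts, all for primes distinct from $p$, so to show that $G/H$ is a $p$-group it suffices to prove that every $q$-element of $G$, for each prime $q\neq p$, lies in $H$.

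The final step is to verify this, which I would carry out by induction on $|G|$ together with a case analysis on a minimal normal subgroup $M$ of $G$. When $M$ is an elementary abelian $r$-group with $r\neq p$ and $M\not\leq Z(G)$, minimality forces $[M,G]=M$, and iterating yields $M=[M,\underbrace{G,\dots,G}_{k-1}]$, exhibiting $M$ as generated by $\gamma_k$-values of the form $[m,g_1,\dots,g_{k-1}]$ lying inside $M$ and hence of $r$-power order, so $M\leq H$. When $M$ is non-abelian, namely a direct product of isomorphic non-abelian simple groups, Ore's conjecture applied to the simple factors allows one to rewrite each prime-power element of $M$ of order coprime to $p$ as a commutator, and then iteratively as a $\gamma_k$-value in $G$ of the same prime-power order; combined with $M=O^p(M)$ this gives $M\subseteq H$. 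The remaining cases ($M$ central of order coprime to $p$, or $M$ an elementary abelian $p$-group) are handled by passing to $G/M$ and invoking the inductive hypothesis. The main obstacle is the lifting in these last cases: a $\gamma_k$-value in $G/M$ of $q$-power order comes from a $\gamma_k$-value in $G$ whose order may differ from the desired value by a contribution from $M$, and one must argue that this discrepancy is absorbed in $H$ or becomes irrelevant modulo $H$.
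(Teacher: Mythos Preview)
The paper does not supply its own proof of this lemma; it is quoted verbatim from an external reference (\cite[Lemma~4]{Carmine.meta}). So there is no in-paper argument to compare against, and I can only evaluate your plan on its own merits.

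Your reductions are sound: $H\trianglelefteq G$, the quotient $G/H$ is perfect, and it suffices to show $G/H$ is a $p$-group. The arguments in your cases (a) and (b) correctly establish $M\leq H$. The genuine gap, however, is the lifting step, and it is broader than you indicate. You flag it only for the ``remaining cases'' ($M$ central of $p'$-order, or $M$ a $p$-group), but it is equally present in cases (a) and (b): establishing $M\leq H$ does not finish those cases. To conclude $G=H$ you must still invoke the inductive hypothesis $G/M=H(G/M)$ and then deduce $H(G/M)\leq H/M$. A generator of $H(G/M)$ is a $\gamma_k$-value $\bar y$ of $q$-power order ($q\neq p$); its lift $y\in G_{\gamma_k}$ has order $q^{a}\cdot d$ with $d\mid |M|$, and whenever $|M|$ involves a prime $r\neq q$ the lift $y$ need not have prime-power order, hence need not lie in $H$. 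Replacing $y$ by its $q$-part $y_q$ does not help, since powers of $\gamma_k$-values are not $\gamma_k$-values in general, and ``absorbing the discrepancy in $H$'' amounts to assuming $y\in H$, which is precisely what you are trying to prove. Thus the induction on $|G|$ via a minimal normal subgroup does not close, in any of the four cases, without an additional idea that your outline does not provide.
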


\begin{lemma}\label{delta.focal}\cite[Lemma 2.5]{Pavel}
Let $G$ be a finite soluble group and let $Q$ be a Sylow $q$-subgroup of $G$. Then  for every $i \geq 1$ the group $Q \cap G^{(i)}$ can be generated by $\delta_i$-values lying in $Q$. 
\end{lemma}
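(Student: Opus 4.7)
The plan is to proceed by induction on $i$. For the base case $i=1$, the statement reduces to the classical focal subgroup theorem: for a Sylow $q$-subgroup $Q$ of any finite group $G$, one has
\[Q \cap G' = \langle [a,g] : a \in Q,\ g \in G,\ [a,g] \in Q \rangle,\]
and each such $[a,g]$ is a $\delta_1$-value lying in $Q$, which gives the conclusion at level $i=1$.

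For the inductive step, I would set $H := G^{(i)}$. Since $H$ is normal in $G$ and soluble, $Q \cap H$ is a Sylow $q$-subgroup of $H$, so the focal subgroup theorem applied inside $H$ yields
\[Q \cap G^{(i+1)} = (Q \cap H) \cap H' = \langle [u,v] : u \in Q \cap H,\ v \in H,\ [u,v] \in Q \rangle.\]
The inductive hypothesis writes each such $u$ as a product $u_1 \cdots u_r$ of $\delta_i$-values in $Q$, while $v \in G^{(i)}$ decomposes as a product $v_1 \cdots v_s$ of $\delta_i$-values in $G$; using Lemma~\ref{Pavel.sol} in the style of the proof of Lemma~\ref{Pavel.general}, I may further assume that these $\delta_i$-values have prime power order. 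Expanding via the commutator identities $[ab,c] = [a,c]^b[b,c]$ and $[a,bc] = [a,c][a,b]^c$ then rewrites $[u,v]$ as a product of conjugates of the commutators $[u_j,v_k]$, each of which is a $\delta_{i+1}$-value.

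The main obstacle is to guarantee that these $\delta_{i+1}$-values actually lie in $Q$, since an individual factor $[u_j,v_k]^g$ need not belong to $Q$ even when the full product does. This is where the prime power decomposition is essential: for factors $v_k$ of $q$-power order, I replace $v_k$ by a suitable $G$-conjugate to arrange $[u_j,v_k] \in Q$ (a conjugate of a $\delta_{i+1}$-value is still a $\delta_{i+1}$-value), whereas for factors $v_k$ of $q'$-order the commutator $[u_j,v_k]$ contributes to the product $[u,v]$ in a way that must cancel modulo $\delta_{i+1}$-values already in $Q$, because the total product lies in the $q$-group $Q \cap G^{(i+1)}$. I expect the cleanest way to make this cancellation rigorous is a secondary induction on $|G|$, passing to a minimal normal subgroup and reducing to the case where $G^{(i+1)}$ is an elementary abelian $q$-chief factor, at which point the focal subgroup generators become $\delta_{i+1}$-values in $Q$ after a minimal rewriting, completing the inductive step.
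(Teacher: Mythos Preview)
The paper does not supply a proof of this lemma at all: it is quoted verbatim as \cite[Lemma 2.5]{Pavel}, so there is no in-paper argument to compare against. Your task, if you want to include a proof, is therefore to produce a self-contained one.

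Your overall scheme (induction on $i$, focal subgroup theorem for the base case, focal subgroup theorem inside $H=G^{(i)}$ for the step) is the natural one, and the base case is fine. But the inductive step, as written, has two genuine gaps.

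First, the sentence ``using Lemma~\ref{Pavel.sol} in the style of the proof of Lemma~\ref{Pavel.general}, I may further assume that these $\delta_i$-values have prime power order'' is not justified. Lemma~\ref{Pavel.general} concerns $\gamma_k$-values, and its proof works because a commutator-closed set $X$ is stable under forming long left-normed commutators $[x_1,\dots,x_t]$; it is \emph{not} automatically stable under forming $\delta_i$-commutators, which require a balanced binary tree of commutators rather than a left-normed string. So you cannot simply import that argument to conclude that $G^{(i)}$ is generated by $\delta_i$-values of prime power order.

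Second, and more seriously, you yourself flag the real obstacle and then do not resolve it. Expanding $[u,v]$ via the identities $[ab,c]=[a,c]^b[b,c]$ and $[a,bc]=[a,c][a,b]^c$ produces conjugates $[u_j,v_k]^g$ that are $\delta_{i+1}$-values of $G$, but there is no mechanism forcing them into $Q$. Your proposed remedy---conjugating the $q$-power $v_k$'s into $Q$ and asserting that the $q'$-contributions ``must cancel modulo $\delta_{i+1}$-values already in $Q$''---is not an argument: knowing only that the total product lies in $Q$ gives no control over how the individual factors sit relative to $Q$, and there is no a priori supply of $\delta_{i+1}$-values in $Q$ against which to ``cancel''. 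The final fallback (secondary induction on $|G|$, reduce to an elementary abelian $q$-chief factor) is a plausible strategy, but as stated it is a hope rather than a proof; you would still need to explain, in that minimal configuration, why the focal generators can be rewritten as $\delta_{i+1}$-values lying in $Q$. Until that is done, the inductive step is incomplete.
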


 The following relation between the subgroups $\gamma_k(G)$ and $G^{(k)}$ of $G$ is an immediate consequence of \cite[Lemma 3.25]{KHU1}.

\begin{lemma}\label{delta.gamma.inclusion} If $G$ is a group, then for every $k\geq 0$ we have
$G^{(k)} \leq \gamma_{k+1}(G)$.
\end{lemma}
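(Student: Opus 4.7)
The plan is to argue by induction on $k \geq 0$. For the base case $k=0$, the two subgroups actually coincide: by convention $G^{(0)} = G = \gamma_1(G)$, so the inclusion holds with equality. (As a sanity check, the case $k=1$ gives $G^{(1)} = G' = \gamma_2(G)$, which is the other trivial case.)

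For the inductive step, assume $G^{(k)} \leq \gamma_{k+1}(G)$ and aim to show $G^{(k+1)} \leq \gamma_{k+2}(G)$. By the definition of the derived series, $G^{(k+1)} = [G^{(k)}, G^{(k)}]$. Applying the inductive hypothesis in both commutator slots, together with the inclusion $\gamma_{k+1}(G) \leq G$ and the monotonicity of the commutator subgroup operation, we obtain
\[
G^{(k+1)} = [G^{(k)}, G^{(k)}] \leq [\gamma_{k+1}(G), \gamma_{k+1}(G)] \leq [\gamma_{k+1}(G), G] = \gamma_{k+2}(G),
\]
which closes the induction.

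There is no real obstacle here: the argument uses only the definitions of the two series and the elementary fact that $[A,B] \leq [C,D]$ whenever $A \leq C$ and $B \leq D$. The conceptual content is simply that each step of the derived series commutes $G^{(k)}$ with itself, whereas the corresponding step of the lower central series commutes $\gamma_{k+1}(G)$ with the larger group $G$; hence the derived series descends at least as fast, shifted by one index.
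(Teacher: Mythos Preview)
Your proof is correct and is the standard elementary induction argument for this well-known fact. The paper does not actually prove the lemma but merely records it as an immediate consequence of \cite[Lemma 3.25]{KHU1}; your self-contained argument is exactly the kind of proof one finds behind such a citation, so there is nothing substantively different to compare.
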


We conclude this section with a direct application of \cite[Theorem 1]{quasisimple}, which says that in almost every finite quasisimple group all elements are commutators.

\begin{prop}\label{thm.quasi} Let $G$ be a finite quasisimple group and suppose that $Z(G)$ is a $p$-group. Then every element of $G$ is a commutator, with the following exceptions:
\begin{enumerate}
    \item $p=3$, $G/Z(G) \cong \A_6$, $Z(G) \cong C_3$ and the non-central elements of $G$ that are not commutators have order $12$;
    \item $p=2$, $G/Z(G) \cong \PSL(3,4)$, $Z(G) \cong C_2 \times C_4$ and the non-central elements of $G$ that are not commutators have order $6$;
    \item $p=2$, $G/Z(G) \cong \PSL(3,4)$, $Z(G) \cong C_4 \times C_4$ and the non-central elements of $G$ that are not commutators have order $12$.
\end{enumerate}
\end{prop}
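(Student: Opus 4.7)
The plan is to invoke \cite[Theorem 1]{quasisimple} essentially verbatim. That theorem provides a complete classification of the finite quasisimple groups in which not every element is a commutator, together with an explicit description of the exceptional non-central non-commutator elements and their orders. Given this classification, the proof of Proposition \ref{thm.quasi} reduces to filtering the exceptional list by the extra assumption that $Z(G)$ be a $p$-group for some prime $p$.

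First I would reproduce (or cite) the finite exceptional list from \cite[Theorem 1]{quasisimple}: a small collection of central extensions of specific non-abelian simple groups (covers of $\A_6$, $\PSL(3,4)$, and a handful of further cases), each one recorded together with the isomorphism type of its center and the orders of the non-central elements that fail to be commutators. Then, imposing the condition that $Z(G)$ be a $p$-group, I would discard every entry of the list whose center has order divisible by two distinct primes. Since the orders and structures of the centers in question can be read directly from the statement of the cited theorem (or, if needed, from the \ATLAS of finite simple groups), this is a finite and entirely explicit check, with no further group-theoretic input required.

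Carrying out this filtering leaves precisely three cases: the triple cover $3.\A_6$, whose center is $C_3$ (so $p=3$), and the two central extensions of $\PSL(3,4)$ whose centers are $C_2 \times C_4$ and $C_4 \times C_4$ respectively (so $p=2$ in both). For each of these three quasisimple groups, I would quote from \cite[Theorem 1]{quasisimple} the order of the exceptional non-central elements, obtaining $12$, $6$ and $12$ respectively, which are precisely items (1), (2) and (3) of the statement. The only real obstacle is bookkeeping: lining up the isomorphism type of the center and the element orders from the cited theorem with the notation of the three items. Should the cited theorem not make these orders fully explicit, a short fallback verification via the \ATLAS\ character tables of $3.\A_6$ and of the two non-cyclic central covers of $\PSL(3,4)$ settles the matter in finitely many steps.
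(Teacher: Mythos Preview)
Your proposal is correct and matches the paper's treatment exactly: the paper does not give a proof of this proposition at all, introducing it merely as ``a direct application of \cite[Theorem 1]{quasisimple}''. Filtering the exceptional list of that theorem by the requirement that $Z(G)$ be a $p$-group is precisely the intended (and only) step.
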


\section{Groups with Property $P(w,p)$}
In this section $w$ is any group-word. We study the properties of groups satisfying property $P(w,p)$ but being minimal such that $w(G)$ is not $p$-nilpotent.

\begin{definition}

We say that a group $G$ is a { \it minimal $P(w,p)$-exception} if 
\begin{itemize}
    \item $G$ satisfies $P(w,p)$,
\item $w(G)$ is not $p$-nilpotent,  
\item whenever $H$ is a proper subgroup of $G$, the group $w(H)$ is $p$-nilpotent;
\item whenever $G/N$ is a proper quotient of $G$ satisfying $P(w,p)$, the group $w(G/N)$ is $p$-nilpotent.
\end{itemize}
\end{definition}

Next lemma shows that  property $P(w,p)$ is closed with respect to forming subgroups and certain images.

\begin{lemma}\label{sbg.and.quotient.general}
Let $G$ be a finite group satisfying $P(w,p)$.
\begin{itemize}
    \item If $H \leq G$ is a subgroup of $G$ then $H$ satisfies $P(w,p)$.
    \item If $N\norm G$ is a normal subgroup of $G$ of $p'$-order, then $G/N$ satisfies $P(w,p)$.
\end{itemize}
\end{lemma}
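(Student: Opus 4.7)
The plan is to handle the two bullets separately. The first is essentially automatic from the definitions, while the second requires lifting $w$-values from the quotient $G/N$ and then translating the order information across the projection, using crucially that $N$ is a $p'$-group.

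For the first bullet, I would observe that $H_w \subseteq G_w$, because any evaluation $w(h_1,\dots,h_k)$ with $h_i \in H$ is also an evaluation with $h_i \in G$. Hence if $x \in H_w$ has $p'$-order and $y \in H_w$ is a non-trivial element of order divisible by $p$, then $x$ and $y$ also fulfil the hypotheses of $P(w,p)$ viewed in $G$, so $p \mid o(xy)$ by assumption.

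For the second bullet, start with $\bar x,\bar y \in (G/N)_w$ where $\bar x$ has $p'$-order and $\bar y\neq 1$ has order divisible by $p$. Since every $w$-value in $G/N$ is the image of a $w$-value in $G$, pick $x,y \in G_w$ with $xN = \bar x$ and $yN = \bar y$. Write $x = x_p x_{p'}$ as the commuting product of its $p$-part and $p'$-part; then $\bar x = \overline{x_p}\, \overline{x_{p'}}$ is the analogous decomposition in $G/N$. Because $\bar x$ has $p'$-order, $\overline{x_p}=1$, i.e.\ $x_p \in N$. But $N$ is a $p'$-group and $x_p$ is a $p$-element, so $x_p = 1$ and therefore $x = x_{p'}$ itself has $p'$-order. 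For $y$, any lift works: $o(\bar y)$ divides $o(y)$, so $p \mid o(y)$, and $\bar y \neq 1$ gives $y \neq 1$. Applying $P(w,p)$ in $G$ to this pair yields $p \mid o(xy)$. Now take the $p$-part $(xy)_p \neq 1$: it is a non-trivial $p$-element of $G$, so it is not contained in the $p'$-group $N$, whence $\overline{(xy)_p}\neq 1$ is a non-trivial $p$-element of $G/N$. Therefore $p \mid o(\overline{xy}) = o(\bar x\bar y)$, as required.

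The only potentially delicate point is the lifting step, since in general one cannot guarantee that a lift of a $p'$-element is itself a $p'$-element. The hypothesis that $N$ is a $p'$-group is exactly what makes this free: the $p$-part of any lift must land in $N$ and hence be trivial. I do not expect any real obstacle; the argument is essentially bookkeeping with $p$-parts and $p'$-parts through the quotient map.
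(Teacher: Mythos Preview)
Your proof is correct and follows the same approach as the paper. The only difference is cosmetic: where the paper asserts directly that a lift of a $p'$-element through a $p'$-kernel has $p'$-order (and likewise that $p\mid o(xy)$ forces $p\mid o(\overline{xy})$), you make these steps explicit via the $p$-part/$p'$-part decomposition, which is a perfectly clean way to justify them.
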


\begin{proof}
If $H \leq G$ then for every $h\in H_w$ we have $h\in G_w$. Thus $H$ satisfies $P(w,p)$ if $G$ does.

Assume that $N$ is a normal subgroup of $G$ of $p'$-order and consider $\ov{G} = G/N$. Let $xN \in \ov{G}_w$ be a $w$-value of $p'$-order and let $1 \neq yN \in \ov{G}_w$ be a $w$-value of order divisible by $p$. Then we can assume that $x, y \in G_w$. Also, $p$ divides the order of $y$ and, since $N$ has $p'$-order, $p$ does not divide the order of $x$. Thus by  $P(w,p)$ we deduce that $p$ divides the order of $xy$.
In particular $xy \notin N$ and $p$ divides the order of $xyN = xN \cdot yN$. This shows that $\ov{G}$ satisfies property $P(w,p)$. 
\end{proof}

\begin{lemma}\label{Op'.word}
If $G$ is a  minimal $P(w,p)$-exception then $O_{p'}(G) = 1$ and $Fit_p(G) = O_p(G)$.
\end{lemma}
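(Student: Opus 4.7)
The plan is to argue by contradiction for the first equality, and then obtain the second as an immediate consequence of Lemma \ref{fitp.eq.Op}.

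Suppose, towards a contradiction, that $N:=O_{p'}(G)\neq 1$. Then $G/N$ is a proper quotient of $G$. Since $N$ has $p'$-order and $G$ satisfies $P(w,p)$, the second part of Lemma \ref{sbg.and.quotient.general} gives that $G/N$ also satisfies $P(w,p)$. By the last bullet in the definition of a minimal $P(w,p)$-exception, it follows that $w(G/N)$ is $p$-nilpotent.

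Next I would exploit the identifications
\[
w(G/N) \;=\; w(G)N/N \;\cong\; w(G)/\bigl(w(G)\cap N\bigr).
\]
The subgroup $M:=w(G)\cap N$ is a normal $p'$-subgroup of $w(G)$, and $w(G)/M$ is $p$-nilpotent by the previous paragraph. A standard argument then shows that $w(G)$ itself is $p$-nilpotent: if $K/M$ is the normal $p$-complement of $w(G)/M$, then $K$ is normal in $w(G)$, is a $p'$-group (being an extension of a $p'$-group by a $p'$-group), and has $p$-power index, so it is a normal $p$-complement of $w(G)$. This contradicts the assumption that $w(G)$ is not $p$-nilpotent, and hence forces $O_{p'}(G)=1$.

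With $O_{p'}(G)=1$ established, the equality $Fit_p(G)=O_p(G)$ is immediate from Lemma \ref{fitp.eq.Op}, completing the proof. The argument is quite short and the main (mild) obstacle is simply matching the minimality hypothesis with the right quotient and then checking that $p$-nilpotency lifts from $w(G)/M$ back to $w(G)$; both steps reduce to standard facts about normal $p$-complements.
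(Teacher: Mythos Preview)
Your proof is correct and follows essentially the same approach as the paper: both argue by contradiction, pass to the quotient by $N=O_{p'}(G)$, use minimality to get $w(G)N/N$ $p$-nilpotent, and then pull back a normal $p$-complement to $w(G)$. The only cosmetic difference is that you invoke the second isomorphism theorem to work in $w(G)/(w(G)\cap N)$, while the paper works directly in $w(G)N/N$ and intersects the complement with $w(G)$; the two are equivalent.
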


\begin{proof}
 Set $N=O_{p'}(G)$.
Aiming for a contradiction, suppose $N \neq 1$ and consider $\ov{G} = G/N$.
Then $|\ov{G}| < |G|$ and by Lemma \ref{sbg.and.quotient.general} the group $\ov{G}$ satisfies $P(w,p)$. Since $G$ is a  minimal $P(w,p)$-exception we deduce that $w(\ov{G}) = w(G)N/N$ is $p$-nilpotent.  Then $w(G)N/N = S/N \cdot H/N$ where $S/N \in \Syl_p(w(G)N/N)$ and  $H/N = O_{p'}(w(G)N/N)$. 
Set $K= H \cap w(G)$. Since $H \norm w(G)N$, we deduce that $K \norm w(G)$. Also, $|K|$ divides $|H| = [H \colon N]|N|$ and so $|K|$ is prime to $p$. Finally, $[w(G) \colon K] = [w(G)H \colon H]$ divides $[w(G)N \colon H]$, that is a power of $p$. We deduce that $K$ is a normal $p$-complement of $w(G)$ and $w(G)$ is $p$-nilpotent, a contradiction. Therefore we must have $O_{p'}(G) = 1$. The second part of the statement now follows from Lemma \ref{fitp.eq.Op}.

\end{proof}

\begin{lemma}\label{normal.sylow} Suppose $G$ is a minimal $P(w,p)$-exception, $G > G'$ and $G'/w(G')$ is nilpotent. Then $G'$ has a normal Sylow $p$-subgroup.
\end{lemma}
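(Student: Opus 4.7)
The plan is to proceed in three steps: first extract $p$-nilpotency of $w(G')$ from the minimality hypothesis, then upgrade this to $w(G')$ being a $p$-group using $O_{p'}(G) = 1$, and finally combine with nilpotency of $G'/w(G')$ to produce a normal Sylow $p$-subgroup of $G'$.

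First, since $G > G'$, the subgroup $G'$ is a proper subgroup of $G$, so by the third bullet in the definition of minimal $P(w,p)$-exception (applied to $H = G'$) the subgroup $w(G')$ is $p$-nilpotent. Let $K = O_{p'}(w(G'))$ be its normal $p$-complement. Then $K$ is characteristic in $w(G')$, which is characteristic in $G'$, which is normal in $G$; hence $K$ is a normal $p'$-subgroup of $G$ and therefore $K \leq O_{p'}(G)$. By Lemma \ref{Op'.word} we have $O_{p'}(G) = 1$, so $K = 1$ and $w(G')$ coincides with one of its Sylow $p$-subgroups. In other words, $w(G')$ is a $p$-group.

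Next I would invoke the hypothesis that $G'/w(G')$ is nilpotent. A finite nilpotent group is the direct product of its Sylow subgroups, so $G'/w(G')$ has a unique Sylow $p$-subgroup, necessarily of the form $P/w(G')$ for some $P$ with $w(G') \leq P \leq G'$, and this Sylow subgroup is normal in $G'/w(G')$. By the correspondence theorem $P \norm G'$. Moreover $|P| = |w(G')| \cdot |P/w(G')|$ is a $p$-power because both factors are, while $[G':P] = [G'/w(G') : P/w(G')]$ equals the product of the orders of the Sylow $q$-subgroups of $G'/w(G')$ for primes $q \neq p$, hence is coprime to $p$. Thus $P$ is a normal Sylow $p$-subgroup of $G'$, as claimed.

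The one substantive step is the reduction from $w(G')$ being $p$-nilpotent to $w(G')$ being a $p$-group, and this is handled entirely by Lemma \ref{Op'.word}; once it is in place, the remainder is a standard manipulation of normal series and nilpotent quotients, so I do not expect any further obstacle.
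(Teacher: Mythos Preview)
Your proof is correct, and in fact more direct than the paper's. Both arguments agree through the point where $w(G')$ is shown to be a $p$-group (using minimality and Lemma \ref{Op'.word}). From there the paper observes that $G'$ is metanilpotent, shows $F(G')$ is a $p$-group via $F(G') \leq Fit_p(G) = O_p(G)$, and then invokes Lemma \ref{meta} (a result on metanilpotent groups) to force a Sylow $p$-subgroup $P$ of $G'$ into $F(G')$, giving $P = F(G')$. Your route avoids this machinery entirely: once $w(G')$ is a $p$-group, you simply lift the normal Sylow $p$-subgroup of the nilpotent quotient $G'/w(G')$ back to $G'$ via the correspondence theorem. This is a genuinely shorter argument that does not require the metanilpotent lemma at all; the paper's approach, on the other hand, yields the extra information that the Sylow $p$-subgroup coincides with $F(G')$, though that is not used later.
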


\begin{proof}
Note that $G'$ satisfies $P(w,p)$ by Lemma \ref{sbg.and.quotient.general}.
Since $G$ is a  minimal $P(w,p)$-exception and $|G'| < |G|$ we deduce that $w(G')$ is $p$-nilpotent. Note that $w(G')$ is a normal subgroup of $G'$, so $O_{p'}(w(G')) \leq O_{p'}(G') \leq O_{p'}(G)=1$ by Lemma \ref{Op'.word}. Hence $O_{p'}(w(G')) =1$ and since $w(G')$ is $p$-nilpotent we deduce that $w(G')$ is a $p$-group. In particular $w(G')$ is nilpotent. By assumption $G'/w(G')$ is nilpotent. Hence the group $G'$ is metanilpotent. Let $F(G')$ denote the Fitting subgroup of $G'$. Then $F(G') \leq Fit_p(G') \leq Fit_p(G) = O_p(G)$ (again by Lemma \ref{Op'.word}), so $F(G')$ is a $p$-group. Let $P\in \Syl_p(G')$ be a Sylow $p$-subgroup. Then $F(G')\leq P$. On the other hand, $[P, O_{p'}(F(G'))] =[P, 1] = 1$ and by Lemma \ref{meta} we get $P \leq F(G')$.
Therefore $P= F(G')$ is normal in $G'$.
\end{proof}

\begin{lemma}\label{self.centered.sylow} Suppose $G$ is a minimal $P(w,p)$-exception and soluble. If $H$ is a normal subgroup of $G$ and $P\in \Syl_p(H)$ then $C_H(P) \leq P$.
\end{lemma}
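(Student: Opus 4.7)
The plan is to leverage solubility together with Lemma~\ref{Op'.word} to reduce the statement to the classical fact that in a soluble group the centralizer of the Fitting subgroup is contained in the Fitting subgroup itself. So the proof will hinge on showing that $F(H) = O_p(H)$ and then observing that $O_p(H) \leq P$.

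First I would observe that since $H$ is normal in $G$, the subgroup $O_{p'}(H)$ is characteristic in $H$ and hence normal in $G$. Being a normal $p'$-subgroup of $G$, it must be contained in $O_{p'}(G)$, which is trivial by Lemma~\ref{Op'.word}. Therefore $O_{p'}(H) = 1$. Since $H$ is soluble and its Fitting subgroup decomposes as the direct product of the subgroups $O_q(H)$ over all primes $q$, and since $O_q(H) \leq O_{p'}(H) = 1$ for every prime $q \neq p$, I would conclude that $F(H) = O_p(H)$. Moreover, as $O_p(H)$ is a normal $p$-subgroup of $H$, it is contained in every Sylow $p$-subgroup of $H$; in particular $O_p(H) \leq P$.

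Finally I would invoke the classical fact that $C_H(F(H)) \leq F(H)$ for any soluble group $H$. Combining everything,
\[
C_H(P) \;\leq\; C_H(O_p(H)) \;=\; C_H(F(H)) \;\leq\; F(H) \;=\; O_p(H) \;\leq\; P,
\]
which yields the desired conclusion. I do not anticipate any serious obstacle: the argument is essentially a direct combination of Lemma~\ref{Op'.word} with two well-known facts from the theory of finite soluble groups, and the minimality hypothesis on $G$ enters the proof only through its use in Lemma~\ref{Op'.word}.
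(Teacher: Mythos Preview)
Your proof is correct and follows essentially the same route as the paper: both use Lemma~\ref{Op'.word} to get $O_{p'}(H)=1$, deduce that $F(H)$ is a $p$-group contained in $P$, and then apply the classical fact $C_H(F(H))\leq F(H)$ for soluble $H$. The only cosmetic difference is that the paper phrases the containment $F(H)\leq P$ via $Fit_p(H)$, whereas you obtain $F(H)=O_p(H)$ directly from the decomposition $F(H)=\prod_q O_q(H)$.
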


\begin{proof}
By Lemma \ref{Op'.word} we have $O_{p'}(G) = 1$. Since $H$ is normal in $G$ we have $O_{p'}(H) = 1$ and so $Fit_p(H)=O_p(H) \leq P$ (with the same proof used to show that $Fit_p(G)=O_p(G)$). Then $F(H) \leq Fit_p(H) \leq P$ and so $C_{H}(P) \leq C_{H}(F(H))$. Since $G$ is soluble, $H$ is soluble as well and by  \cite[Theorem 6.1.3]{GOR} we get 
\[ C_{H}(P) \leq C_{H}(F(H)) \leq F(H) \leq Fit_p(H) \leq P.\]
Therefore $C_H(P) \leq P$.
\end{proof}

\section{Proof of Theorem \ref{thm.gamma}}
In this section we consider the word $w=\gamma_k$ and we prove Theorem \ref{thm.gamma}.

Note that if a finite group $G$ satisfies property $P(\gamma_2,p)$ then it satisfies property $P(\gamma_k,p)$ for every $k\geq 2$. However, in general the converse is not true. As an example, consider the group $G$ generated by $g_1, g_2, g_3, h_1, h_2$ subject to the following relations
\[g_i^2 = h_j^3 = 1, \ \  [g_2,g_1] = g_3, \ \ [h_1,g_1]= [h_1,g_3] = h_1, \ \ [h_2, g_3] = h_2, h_1^{g_2} = h_2, \ \  h_2^{g_2} = h_1.
\]

Then $G$ is a group of order $72$ with $\gamma_3(G) \cong C_3 \times C_3$, thus $G$ satisfies property $P(\gamma_3,3)$. However $h_2 = [h_2,g_3]$ has order $3$, $g_3 = [g_2,g_1]$ has order 2 and $h_2g_3$ has order $2$. Thus $G$ does not satisfy property $P(\gamma_2,3)$.

\begin{lemma}\label{G2.G3}
Let $G$ be a quasisimple group. Then $G_{\gamma_2} = G_{\gamma_k}$ for every $k\geq 2$. In particular if $G$ is quasisimple, $p$ is a prime and $k \geq 2$, then $G$ satisfies property $P(\gamma_k,p)$ if and only if it satisfies property $P(\gamma_2,p)$.
\end{lemma}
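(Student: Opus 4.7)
The plan is to establish the nontrivial inclusion $G_{\gamma_2}\subseteq G_{\gamma_k}$ by induction on $k\geq 2$; the reverse inclusion is immediate from the recursive identity $\gamma_k(x_1,\dots,x_k)=[\gamma_{k-1}(x_1,\dots,x_{k-1}),x_k]$, which displays every $\gamma_k$-value as a commutator, and the base case $k=2$ is trivial.

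For the inductive step I would start from a commutator $c=[a,b]\in G_{\gamma_2}$ and pass to the simple quotient $\bar G:=G/Z(G)$. Since $G$ is quasisimple, $\bar G$ is non-abelian simple, so $Z(\bar G)=1$ is (vacuously) a $p$-group for every prime $p$; moreover, none of the three exceptional families in Proposition~\ref{thm.quasi} has trivial center, hence that proposition applies to $\bar G$ without exception and every element of $\bar G$, in particular $\bar a$, is a commutator. The inductive hypothesis applied to the quasisimple group $\bar G$ then yields $\bar G_{\gamma_{k-1}}=\bar G_{\gamma_2}=\bar G$, so $\bar a=\gamma_{k-1}(\bar u_1,\dots,\bar u_{k-1})$ for suitable $\bar u_i\in\bar G$. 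Choosing arbitrary preimages $u_i\in G$, the relation lifts to $a=\gamma_{k-1}(u_1,\dots,u_{k-1})\,z$ for some $z\in Z(G)$, and the central-absorption identity $[xz,b]=[x,b]$ (valid whenever $z\in Z(G)$) gives
\[
c=[a,b]=[\gamma_{k-1}(u_1,\dots,u_{k-1}),b]=\gamma_k(u_1,\dots,u_{k-1},b)\in G_{\gamma_k},
\]
closing the induction. The ``in particular'' clause is then immediate, since the property $P(\gamma_k,p)$ depends only on the set $G_{\gamma_k}=G_{\gamma_2}$.

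The main obstacle is precisely that Proposition~\ref{thm.quasi} admits genuine exceptions among quasisimple groups in which not every element is a commutator, so it cannot be applied directly to $G$; the trick of reducing modulo $Z(G)$, together with the identity $[xz,y]=[x,y]$ for central $z$, is what neutralises the central ambiguity in the lift and lets the induction run uniformly over all quasisimple groups.
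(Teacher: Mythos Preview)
Your proof is correct and follows essentially the same route as the paper's: express $a$ as a $\gamma_{k-1}$-value times a central element and then use the identity $[xz,b]=[x,b]$ for $z\in Z(G)$. The only cosmetic differences are that the paper invokes Ore's conjecture directly (rather than via Proposition~\ref{thm.quasi} applied to the simple quotient), and that the paper applies the inductive hypothesis to $G$ itself---writing $a=xz$ with $x\in G_{\gamma_2}=G_{\gamma_{k-1}}$---whereas you apply it to $\bar G$.
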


\begin{proof}
It is enough to prove that $G_{\gamma_2} = G_{\gamma_3}$, then the result will follow by induction on $k$.
Clearly every $\gamma_3$-value is a commutator, so $G_{\gamma_3} \subset G_{\gamma_2}$.

Now, let $g=[a,b]\in  G_{\gamma_2}$ be a commutator. Using Ore's conjecture and the fact that $G$ is quasisimple, we can write $a=xz$ where $x$ is a commutator and $z\in Z(G)$. So $g=[xz,b] = [x,b] \in G_{\gamma_3}$. Therefore $G_{\gamma_2} \subset G_{\gamma_3}$. This completes the proof.
\end{proof}

\begin{lemma}\label{p'.word} Let $G$ be a finite group, $p$ a prime and $k \geq 2$. Suppose that $G$ satisfies $P(\gamma_k,p)$ and let $x\in G_{\gamma_k}$ be a $\gamma_k$-value of $p'$-order. Then for every element $g \in G$, the element $[g,_{k-1} x]$ has $p'$-order.
\end{lemma}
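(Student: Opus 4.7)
The plan is a short proof by contradiction, leveraging the hypothesis $P(\gamma_k,p)$ on a carefully chosen pair of $\gamma_k$-values whose product, by an elementary commutator identity, equals $x$ itself.

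I would set $c := [g,_{k-1}x]$ and $y := [g,_{k-2}x]$, so that $c = [y,x]$ (for $k=2$ this reads $y=g$). Assuming toward a contradiction that $c$ is nontrivial with $p \mid o(c)$, the key computation is
\[
c \;=\; [y,x] \;=\; y^{-1}x^{-1}yx \;=\; (x^{y})^{-1}\cdot x,
\]
which rearranges to the identity $x^{y}\cdot c = x$.

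Next I would observe that $x^{y}$ is itself a $\gamma_k$-value: since conjugation by $y$ is an automorphism of $G$, if $x = [a_1,\ldots,a_k]$ then $x^{y} = [a_1^{y},\ldots,a_k^{y}] \in G_{\gamma_k}$, and plainly $o(x^{y})=o(x)$ is coprime to $p$. Moreover $c = [[g,_{k-2}x],x]$ is a $\gamma_k$-value by its very definition. Thus the pair $(x^{y},c)$ is a legitimate input to $P(\gamma_k,p)$: the first is a $\gamma_k$-value of $p'$-order, the second is a nontrivial $\gamma_k$-value of order divisible by $p$. The hypothesis then forces $p \mid o(x^{y}\cdot c) = o(x)$, contradicting the assumption that $x$ has $p'$-order.

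The entire content of the argument sits in the recognition that $x^{y}\cdot c = x$; no induction on $k$ or on $|G|$ is needed, and no structural analysis of $G$ enters. Accordingly there is no real obstacle: the only things to verify — the one-line identity above and the fact that $G_{\gamma_k}$ is closed under inner automorphisms — are routine, and the conclusion then follows from a single application of the definition of $P(\gamma_k,p)$.
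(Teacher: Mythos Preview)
Your proof is correct and is essentially the same argument as the paper's. The paper writes the identity as $[g,_{k-1}x]\cdot x^{-1} = x^{-[g,_{k-2}x]}$ and applies $P(\gamma_k,p)$ to the pair $(x^{-1},c)$, whereas you rearrange it to $x^{y}\cdot c = x$ and use the pair $(x^{y},c)$; the underlying computation and the single application of $P(\gamma_k,p)$ are identical.
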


\begin{proof}
Note that for every $g \in G$ we have
\[ [g,_{k-1} x] \cdot x^{-1} = x^{-[g,_{k-2}x]}.\]
Aiming for a contradiction, suppose there exists $g\in G$ such that $p$ divides the order of $[g,_{k-1} x]$. Then $[g,_{k-1} x] \neq 1$ and by $P(\gamma_k,p)$ we deduce that $p$ divides the order of  $[g,_{k-1} x] \cdot x^{-1}$. Thus $p$ divides the  order of $x^{-[g,_{k-2}x]}$, that coincides with the order of $x$, a contradiction. This proves the statement. 
\end{proof}

\begin{lemma}\label{p.sbg.word} Let $G$ be a finite group satisfying $P(\gamma_k,p)$. If $P$ is a $p$-subgroup of $G$ and $x\in N_G(P)$ is such that $x \in G_{\gamma_k}$ has $p'$-order, then $[P,x] =1$.
\end{lemma}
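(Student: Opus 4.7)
The plan is to combine the previous lemma with a standard coprime-action argument to push $[P,_{k-1}x]=1$ down to $[P,x]=1$.

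First, I would observe that since $x\in N_G(P)$, the subgroup $P$ is $\langle x\rangle$-invariant, so every commutator $[g,x]$ with $g\in P$ lies again in $P$, and more generally $[g,_{k-1}x]\in P$ for all $g\in P$. On the other hand, Lemma \ref{p'.word} (applied to the element $g\in G$) tells us that $[g,_{k-1}x]$ has $p'$-order. Since $P$ is a $p$-group, the only element of $P$ of $p'$-order is the identity, so $[g,_{k-1}x]=1$ for every $g\in P$. Equivalently, $[P,_{k-1}x]=1$.

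The second ingredient is coprime action: since $x$ has $p'$-order and $P$ is a $p$-group, $\gcd(o(x),|P|)=1$, and the cyclic group $\langle x\rangle$ acts coprimely on $P$. A standard fact of coprime action is that $[P,x]=[P,x,x]$, hence by induction $[P,x]=[P,_{n}x]$ for every $n\geq 1$. Applying this with $n=k-1$ and using the previous paragraph gives $[P,x]=[P,_{k-1}x]=1$, which is exactly the conclusion.

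The only point that needs any care is the inductive reduction from $[P,_{k-1}x]=1$ to $[P,x]=1$. I would either quote the standard equality $[P,x]=[P,x,x]$ under coprime action (which follows from the three-subgroup lemma applied to $\langle x\rangle$, $\langle x\rangle$, $P$ modulo $[P,x,x]$, together with the fact that $\langle x\rangle$ has no nontrivial fixed-point-free action on an elementary abelian $p$-section when $p\nmid o(x)$), or, since all the groups involved are finite and the action is coprime, argue directly: if $Q=[P,x]$, then $Q$ is $x$-invariant with $[Q,_{k-2}x]=1$, so iterating the same reduction on the chain $P\ge [P,x]\ge [P,x,x]\ge\cdots$ the orders strictly drop until we reach $[P,x]=1$. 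Either presentation is short, and this is the only nontrivial step of the proof.
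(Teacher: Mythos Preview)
Your proof is correct and follows essentially the same route as the paper: first use Lemma~\ref{p'.word} together with $x\in N_G(P)$ to force $[P,_{k-1}x]=1$, and then invoke the coprime-action identity $[P,x]=[P,x,x]$ (the paper quotes \cite[Theorem 5.3.6]{GOR} for this) to collapse $[P,_{k-1}x]=1$ down to $[P,x]=1$.

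One minor comment: your ``alternative'' presentation at the end is imprecise. The claim that ``the orders strictly drop'' along the chain $P\ge[P,x]\ge[P,x,x]\ge\cdots$ is not what coprime action gives you; rather, coprime action gives $[P,x]=[P,x,x]$, so the chain is \emph{constant} from the first step on, and hence equal to $[P,_{k-1}x]=1$. Your primary argument already says this correctly, so simply drop the alternative paragraph.
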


\begin{proof}

Let $g\in P$ be an element. By Lemma \ref{p'.word} the element $[g,_{k-1} x] $ has $p'$-order. 
On the other hand, since $x \in N_G(P)$ we have that $[g,_{k-1} x] \in P$. Therefore the only possibility is $[g,_{k-1} x]= 1$. This shows that $[P,_{k-1}x] = 1$. Therefore by \cite[Theorem 5.3.6]{GOR} we get $[P,x] = [P,_{k-1}x] = 1$.
\end{proof}

\begin{lemma}\label{gamma.quasisimple}
Let $G$ be a finite group and $p$ a prime. Suppose $G=G'$, $G$ satisfies $P(\gamma_k,p)$ and assume that $G$ is minimal (with respect to the order) such that $\gamma_k(G)$ is not $p$-nilpotent. Then $Fit_p(G) = O_p(G)= Z(G)$ and $G$ is quasisimple.
\end{lemma}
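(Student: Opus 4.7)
The plan is to extract progressively finer structural information from the three standing assumptions ($G=G'$, $G$ satisfies $P(\gamma_k,p)$, and $\gamma_k(G)$ is not $p$-nilpotent but $G$ is minimal with these properties), proving in turn that $Fit_p(G)=O_p(G)$, that this subgroup coincides with $Z(G)$, and finally that $G/Z(G)$ is simple. To begin, I would show $O_{p'}(G)=1$ by the usual quotient-and-lift argument: if $N:=O_{p'}(G)\ne 1$, then $\overline{G}=G/N$ is a strictly smaller perfect group satisfying $P(\gamma_k,p)$ (by the second part of Lemma \ref{sbg.and.quotient.general}), so minimality of $G$ gives that $\gamma_k(\overline{G})=\gamma_k(G)N/N$ is $p$-nilpotent, and the lifting argument in the proof of Lemma \ref{Op'.word} produces a normal $p$-complement of $\gamma_k(G)$, a contradiction. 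Lemma \ref{fitp.eq.Op} then immediately yields $Fit_p(G)=O_p(G)$.

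Next I would prove $O_p(G)=Z(G)$. The Hall $p'$-subgroup of $Z(G)$ is characteristic in $Z(G)$ and thus normal in $G$, so it lies in $O_{p'}(G)=1$; hence $Z(G)$ is a $p$-group and $Z(G)\leq O_p(G)$. For the reverse inclusion, note $\gamma_k(G)\ne 1$ (otherwise it is trivially $p$-nilpotent), so $p$ divides $|G|$; since $G=G'$, Lemma \ref{lem:gengamma} writes $G$ as generated by $\gamma_k$-values of $q$-power order with $q\ne p$. Each such generator is a $p'$-element of $G_{\gamma_k}$ that normalizes the characteristic subgroup $O_p(G)$, so Lemma \ref{p.sbg.word} forces it to centralize $O_p(G)$. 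Consequently $[O_p(G),G]=1$ and $O_p(G)\leq Z(G)$.

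It remains to show $G/Z$ is simple, where $Z:=Z(G)=O_p(G)$; since $G$ is perfect and $G\ne Z$ (an abelian perfect group is trivial, contradicting that $\gamma_k(G)$ is not $p$-nilpotent), this is equivalent to $G$ being quasisimple. Assume for contradiction that $G/Z$ is not simple, and choose $M\norm G$ with $Z<M<G$ such that $M/Z$ is a minimal normal subgroup of $G/Z$. Standard theory says $M/Z$ is either elementary abelian of some prime exponent $q$, or a direct product of isomorphic non-abelian simple groups. In the elementary abelian case, $q=p$ forces $M$ to be a $p$-group, whence $M\leq O_p(G)=Z$, contradicting $M>Z$; while $q\ne p$, combined with the centrality of $Z$ in $M$ and Schur--Zassenhaus, gives $M=Z\times Q$ with $Q$ a characteristic Hall $p'$-subgroup of $M$, hence $Q\leq O_{p'}(G)=1$ and $M=Z$, again a contradiction.

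In the non-abelian case $M/Z$ is perfect, so $M=M'Z$, and using the centrality of $Z$ one has $M'=[M'Z,M'Z]=[M',M']$; thus $M'$ is a proper perfect subgroup of $G$ satisfying $P(\gamma_k,p)$ by Lemma \ref{sbg.and.quotient.general}. Minimality of $G$ then forces $\gamma_k(M')=M'$ to be $p$-nilpotent, and any perfect $p$-nilpotent group coincides with its normal $p$-complement (its $p$-quotient being simultaneously perfect and a $p$-group, hence trivial), so $M'$ is a $p'$-group. Being characteristic in $M$, $M'$ is normal in $G$, so $M'\leq O_{p'}(G)=1$, whence $M=M'Z=Z$, a final contradiction. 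The main obstacle is precisely this last step: one must use the minimality of $G$ against the proper perfect subgroup $M'$ delivered by a minimal normal subgroup of $G/Z$, and the interplay between $O_{p'}(G)=1$ and the perfect-plus-$p$-nilpotent dichotomy is what forces the argument to close.
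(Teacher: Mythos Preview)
Your proof is correct. The first two parts (showing $Fit_p(G)=O_p(G)$ and then $O_p(G)=Z(G)$) essentially match the paper's argument: you reprove Lemma~\ref{Op'.word} explicitly and cite Lemma~\ref{p.sbg.word} where the paper cites Lemma~\ref{p'.word}, but the substance is identical. (One small wording quibble: the implication ``$\gamma_k(G)\ne 1$, so $p\mid|G|$'' should really read ``if $p\nmid|G|$ then $\gamma_k(G)$ is a $p'$-group, hence $p$-nilpotent''.)

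The genuine difference is in proving that $G/Z(G)$ is simple. You pick a \emph{minimal} normal subgroup $M/Z$ and run a case split (elementary abelian versus product of non-abelian simples), invoking Schur--Zassenhaus in the abelian case and a perfect-plus-$p$-nilpotent analysis of $M'$ in the non-abelian case. The paper proceeds more directly and uniformly: for \emph{any} proper normal $N/Z(G)$ of $G/Z(G)$, minimality gives that $\gamma_k(N)$ is $p$-nilpotent and normal in $G$, hence $\gamma_k(N)\leq Fit_p(G)=Z(G)$; then $\gamma_{k+1}(N)=[\gamma_k(N),N]=1$, so $N$ is nilpotent and $N\leq F(G)\leq Fit_p(G)=Z(G)$. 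This avoids the minimal-normal-subgroup dichotomy entirely by exploiting the equality $Fit_p(G)=Z(G)$ a second time. Your route buys nothing extra here---it is correct but more laborious---though it does have the incidental virtue of working even under the stricter reading of ``minimal'' (minimal among \emph{perfect} counterexamples), since every subgroup or quotient to which you apply minimality is itself perfect.
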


\begin{proof}
Note that $G$ is a minimal $P(\gamma_k,p)$-exception. Hence by Lemma \ref{Op'.word} we have $Fit_p(G)=O_p(G)$.
By Lemma~\ref{lem:gengamma} the group $G$ is generated by $\gamma_k$-values of $p'$-order. Since $Fit_p(G)=O_p(G)$, Lemma \ref{p'.word} implies that $Fit_p(G) \leq Z(G)$. On the other hand, $Z(G)$ is abelian and so $p$-nilpotent and it is normal in $G$, so $Z(G) \leq Fit_p(G)$. Thus $Fit_p(G)=Z(G)$.

It remains to prove that $G$ is quasisimple. Since $G=G'$, it is enough to show that $G/Z(G)$ is simple.
Let $N/Z(G) \norm G/Z(G)$ be a proper normal subgroup of $G/Z(G)$. Then $N \norm G$ is a proper subgroup and $N$ has property $P(\gamma_k,p)$ by Lemma \ref{sbg.and.quotient.general}. Hence $\gamma_k(N)$ is $p$-nilpotent by minimality of $G$. Therefore $\gamma_k(N) \leq Fit_p(G) = Z(G)$. In particular $\gamma_{k+1}(N)= [\gamma_k(N), N] = 1$ and so $N$ is nilpotent and $N \leq Fit_p(G) = Z(G)$. Thus $N/Z(G) = 1$. This shows that $G/Z(G)$ is simple and completes the proof.   
\end{proof}

\begin{lemma}\label{no.exceptions} 
Let $G$ be a finite quasisimple group, $p$ a prime and $k \geq 2$. If $G$ satisfies $P(\gamma_k,p)$ and $Z(G)$ is a $p$-group, then every element of $G$ is a $\gamma_k$-value.
\end{lemma}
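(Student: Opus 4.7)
The plan is to reduce the problem to the three exceptional configurations listed in Proposition~\ref{thm.quasi} and to eliminate each by producing $\gamma_k$-values that contradict $P(\gamma_k, p)$. Since $G$ is quasisimple, Lemma~\ref{G2.G3} says that $G_{\gamma_k} = G_{\gamma_2}$, so the conclusion amounts to showing that every element of $G$ is a commutator. As $Z(G)$ is a $p$-group by hypothesis, Proposition~\ref{thm.quasi} grants this for every non-central element unless $G$ falls into one of the three exceptional families; thus, assuming the conclusion fails, it suffices to derive a contradiction with $P(\gamma_k,p)$ in each of those three cases.

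For every exceptional case the approach is the same. Work in the simple quotient $\bar G = G/Z(G)$ and select $\bar x, \bar y \in \bar G$ with $\bar x$ of $p'$-order, $\bar y$ non-trivial of order divisible by $p$, and $\bar x \bar y$ of $p'$-order; then lift $\bar x, \bar y$ to $G$ in such a way that the orders of $\tilde x$ and $\tilde y$ avoid the forbidden value ($12$ in cases~(1) and~(3), $6$ in case~(2)), so that the lifts are non-central and hence commutators by Proposition~\ref{thm.quasi}; finally, replace $\tilde y$ by $\tilde y z$ for an appropriate $z \in Z(G)$ so that $\tilde x(\tilde y z)$ also has $p'$-order. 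The central adjustment is possible thanks to the identity $(\tilde x \tilde y z)^n = (\tilde x \tilde y)^n z^n$ combined with the fact that $\gcd(n, \exp Z(G)) = 1$ in each case (where $n = o(\bar x \bar y)$), so the $n$-th power map is a bijection on $Z(G)$. Explicitly, for case~(1) the pair $\bar x = (1\,2)(4\,5)$ and $\bar y = (1\,2\,3)$ inside $\A_6$ gives $\bar x \bar y = (2\,3)(4\,5)$ of order $2$, and the adjustment in $Z(G) \cong C_3$ produces $\tilde x \tilde y z$ of order $2$, contradicting $P(\gamma_k, 3)$. For cases~(2) and~(3) one uses a subgroup $\A_5 \leq \PSL(3,4)$ and picks, for example, $\bar x = (1\,2\,3)$, $\bar y = (1\,4)(2\,5)$, so that $\bar x \bar y$ is a $5$-cycle; since $Z(G)$ has exponent $4$, any lift $\tilde y$ has order in $\{2, 4, 8\}$, avoiding both $6$ and $12$, and a central adjustment (using that $4$ and $5$ are coprime) forces the product to have order $5$, again contradicting $P(\gamma_k, 2)$.

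The main obstacle is the bookkeeping in cases~(2) and~(3), where $Z(G)$ is $C_2 \times C_4$ or $C_4 \times C_4$ and one must check simultaneously that the lifts $\tilde x$ and $\tilde y z$ are commutators of the correct parity (in particular that $\tilde y z$ remains of even order after the adjustment) and that the product $\tilde x (\tilde y z)$ has odd order. This is a finite verification based on the power-map data for the covers $4_1.\PSL(3,4)$ and $4_2.\PSL(3,4)$ recorded in the ATLAS, and I expect no conceptual difficulty beyond the verification itself.
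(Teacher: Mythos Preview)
Your argument is correct and follows the same route as the paper: reduce via Proposition~\ref{thm.quasi} to the three exceptional covers and show each violates $P(\gamma_2,p)$ (hence $P(\gamma_k,p)$ by Lemma~\ref{G2.G3}). The paper simply invokes a GAP check for that last step, whereas you produce explicit witnesses by lifting from the simple quotient; this is a mild improvement in that it is computer-free, but not a different strategy. Your hesitation in the final paragraph is unwarranted: the coprimality of $n=o(\bar x\bar y)$ with $\exp Z(G)$ already guarantees the central adjustment, and every lift of $\bar y$ automatically has order in $\{2,4,8\}$ (hence is non-central, avoids the forbidden value $6$ or $12$, and has even order), so no further ATLAS data are needed.
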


\begin{proof}
If every element of $G$ is a commutator, then by induction on $k$ we deduce that every element of $G$ is a $\gamma_k$-value.
Suppose $G$ contains elements that are not commutators. Then by Proposition \ref{thm.quasi} one of the following holds:
\begin{enumerate}
    \item $p=3$, $G/Z(G) \cong \A_6$ and $Z(G) \cong C_3$;
    \item $p=2$, $G/Z(G) \cong \PSL(3,4)$ and $Z(G) \cong C_2 \times C_4$;
    \item $p=2$, $G/Z(G) \cong \PSL(3,4)$ and $Z(G) \cong C_4 \times C_4$.
\end{enumerate}
Using GAP we can check that none of the groups in the above list satisfies property $P(\gamma_2,p)$ (where $p=3$ in the first case and $p=2$ in the others). Hence by Lemma \ref{G2.G3} the groups in the list do not satisfy property $P(\gamma_k,p)$, a contradiction.
\end{proof}

\begin{lemma}\label{gamma.G soluble}
Let $G$ be a finite group, $p$ a prime and $k \geq 2$. Suppose $G$ satisfies $P(\gamma_k,p)$ and is minimal (with respect to the order) such that $\gamma_k(G)$ is not $p$-nilpotent. Then $G$ is soluble and  $G'$ has a normal Sylow $p$-subgroup.
\end{lemma}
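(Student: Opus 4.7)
The plan is first to observe that the order-minimality hypothesis, combined with Lemma \ref{sbg.and.quotient.general}, makes $G$ a minimal $P(\gamma_k,p)$-exception in the sense of Section~3: property $P(\gamma_k,p)$ is inherited by proper subgroups, so by minimality $\gamma_k(H)$ is $p$-nilpotent for every proper $H \leq G$, and the analogous statement holds for the proper quotients retaining the property. In particular Lemma \ref{Op'.word} applies and yields $O_{p'}(G) = 1$. The statement then splits into two tasks: proving $G$ soluble, and then extracting a normal Sylow $p$-subgroup of $G'$.

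For solubility, I would argue by contradiction. If $G$ is not soluble, the derived series stabilises at a non-trivial perfect characteristic subgroup $L = G^{(n)} = G^{(n+1)}$, and I would separate two cases according to whether $L$ is proper in $G$ or coincides with $G$. In the proper case, $L$ inherits $P(\gamma_k,p)$ from $G$, so by minimality $\gamma_k(L)$ is $p$-nilpotent; perfection of $L$ gives $\gamma_k(L) = L$, while $O_{p'}(L) \leq O_{p'}(G) = 1$ forces $L$ to be a non-trivial perfect $p$-group, which is impossible. In the remaining case $L = G$, the group $G$ is itself perfect, so Lemma \ref{gamma.quasisimple} identifies $G$ as quasisimple with $Z(G) = O_p(G)$ a $p$-group, and Lemma \ref{no.exceptions} shows that every element of $G$ is a $\gamma_k$-value. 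Consequently $P(\gamma_k,p)$ coincides with the hypothesis of Corollary \ref{thm.intro} for $G$, making $G$ a $p$-nilpotent group with $O_{p'}(G) = 1$; it would have to be a $p$-group, which is impossible for a quasisimple group. Both cases are contradictions, so $G$ is soluble.

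Having solubility at hand, I note $G > G'$: a non-trivial soluble group is never perfect, and $G = 1$ is ruled out by the assumption that $\gamma_k(G)$ is not $p$-nilpotent. The quotient $G'/\gamma_k(G')$ is nilpotent of class at most $k-1$ by the very definition of the lower central series, so the hypotheses of Lemma \ref{normal.sylow} are satisfied with $w = \gamma_k$, and that lemma directly produces a normal Sylow $p$-subgroup of $G'$, completing the argument.

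The main obstacle is the perfect case $L = G$. The whole Section~3--4 chain (Lemma \ref{gamma.quasisimple}, Proposition \ref{thm.quasi}, Lemma \ref{no.exceptions}) is designed precisely to upgrade the word-restricted condition $P(\gamma_k,p)$ to the universal condition of Corollary \ref{thm.intro}, and the delicate ingredient inside that chain is the verification that the three exceptional quasisimple groups listed in Proposition \ref{thm.quasi} fail $P(\gamma_2,p)$ -- a finite check on which the overall argument ultimately depends.
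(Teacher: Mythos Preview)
Your argument is correct, but it is organised differently from the paper's and uses a slightly different toolbox in two places.

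First, once every element of the quasisimple $G$ is a $\gamma_k$-value, you invoke Corollary~\ref{thm.intro} directly to force $G$ to be $p$-nilpotent, whereas the paper instead appeals to Lemma~\ref{p.sbg.word} to see that $N_G(P)/C_G(P)$ is a $p$-group for every $p$-subgroup $P$ and concludes via the Frobenius criterion. Both routes are valid; yours is a touch more economical since Corollary~\ref{thm.intro} is already available. Second, and more substantively, the paper proves solubility \emph{after} establishing $G>G'$: it observes that $\gamma_k(G')$ is then $p$-nilpotent, hence a $p$-group (as $O_{p'}(G)=1$), and uses Lemma~\ref{delta.gamma.inclusion} to place $G^{(k)}=(G')^{(k-1)}\le\gamma_k(G')$, so $G^{(k)}$ is a $p$-group. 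Your approach instead targets the perfect residual $L=G^{(n)}$ and splits into the cases $L=G$ and $L<G$; in the latter you exploit characteristicity of $L$ to get $O_{p'}(L)\le O_{p'}(G)=1$, forcing the $p$-nilpotent perfect group $L$ to be a $p$-group, a contradiction. Your route avoids Lemma~\ref{delta.gamma.inclusion} at the cost of an extra case split; the paper's route is more linear but needs that inclusion. Both reach the normal Sylow $p$-subgroup of $G'$ via Lemma~\ref{normal.sylow} in the same way.
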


\begin{proof}
We first show that $G > G'$. Aiming for a contradiction, suppose $G=G'$. Then by Lemma \ref{gamma.quasisimple} we deduce that $G$ is quasisimple and $Z(G)=O_p(G)$ is a $p$-group. By Lemma \ref{no.exceptions} every element of $G$ is a $\gamma_k$-value. Let $P\leq G$ be a $p$-subgroup and let $y\in \N_{G}(P)$ be an element of $p'$-order. Then $y$ is a $\gamma_k$-value of $p'$-order and by Lemma \ref{p.sbg.word} we deduce that $[P,y] = 1$. Hence $N_G(P)/C_G(P)$ is a $p$-group. By Frobenius criterion we conclude that $G$ is $p$-nilpotent, a contradiction. Hence $G>G'$.
The fact that $G'$ has a normal Sylow $p$-subgroup follows from Lemma \ref{normal.sylow}.

It remains to show that $G$ is soluble. Since $G$ is a minimal $P(\gamma_k,p)$-exception and $G > G'$, $\gamma_{k}(G')$ is a $p$-nilpotent normal subgroup of $G$. Hence $\gamma_{k}(G') \leq Fit_p(G)$ is a $p$-group by Lemma \ref{Op'.word}. By Lemma \ref{delta.gamma.inclusion} applied to $G'$ we get that $G^{(k)} = (G')^{(k-1)} \leq \gamma_{k}(G')$. Thus $G^{(k)}$ is a $p$-group and $G$ is soluble.
\end{proof}

\begin{proof}[Proof of Theorem \ref{thm.gamma}]
If $\gamma_k(G)$ is $p$-nilpotent then $G$ satisfies property $P(\gamma_k,p)$ by Corollary \ref{thm.intro}.
Suppose that $G$ has property $P(\gamma_k,p)$. Aiming for a contradiction, suppose $G$ is minimal (with respect to the order) such that $\gamma_k(G)$ is not $p$-nilpotent. Then by Lemma \ref{gamma.G soluble} we get that $G$ is soluble and $G'$ has a normal Sylow $p$-subgroup $T$. Set $P=T \cap \gamma_k(G) \in \Syl_p(\gamma_k(G))$. Note that $G$ is a minimal $P(\gamma_k,p)$-exception, so by Lemma \ref{self.centered.sylow} we deduce that $C_{\gamma_k(G)}(P)  \leq P$. Since $P\norm \gamma_k(G)$, Lemma \ref{p.sbg.word} implies that every $\gamma_k$-value of $G$ has order divisible by $p$. Using Lemma \ref{Pavel.general} we conclude that every $\gamma_k$-value of $G$ has $p$-power order. Therefore $\gamma_k(G)\leq P$ is a $p$-group and so it is $p$-nilpotent, a contradiction. This completes the proof.
\end{proof}

\section{Proof of Theorem \ref{thm.delta.sol}}
In this section we consider the word $w=\delta_k$ and we prove Theorem \ref{thm.delta.sol}.

\begin{lemma}\label{p'.word.delta} Let $G$ be a finite group satisfying $P(\delta_k,p)$ and let $x\in G_{\delta_k}$ be a $\delta_k$-value of $p'$-order. Then for every element $g \in G$, the element $[g,x,x]$ has $p'$-order.
\end{lemma}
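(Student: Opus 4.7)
The plan is to follow the strategy of Lemma~\ref{p'.word}, with the commutator identity
\[
[g,x,x]\cdot x^{-1}=x^{-[g,x]}
\]
playing the role of the analogous identity $[g,_{k-1}x]\cdot x^{-1}=x^{-[g,_{k-2}x]}$ in the $\gamma_k$-case. This identity is verified by the direct expansion $[[g,x],x]=[g,x]^{-1}x^{-1}[g,x]x=(x^{-1})^{[g,x]}\cdot x$. Since $G_{\delta_k}$ is closed under inversion and conjugation, $x^{-[g,x]}$ is itself a $\delta_k$-value, and as a conjugate of $x^{-1}$ it has the same order as $x$, which is coprime to $p$.

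Next I would argue by contradiction. Assume that $p\mid o([g,x,x])$ for some $g\in G$, so in particular $[g,x,x]\neq 1$. Once one knows that $[g,x,x]$ is itself a $\delta_k$-value, property $P(\delta_k,p)$ can be applied to the pair formed by $x^{-1}\in G_{\delta_k}$ of $p'$-order and $[g,x,x]\in G_{\delta_k}$ non-trivial of order divisible by $p$: the conclusion is
\[
p\mid o([g,x,x]\cdot x^{-1})=o(x^{-[g,x]})=o(x),
\]
contradicting the hypothesis that $x$ has $p'$-order.

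The main obstacle is therefore to verify that $[g,x,x]\in G_{\delta_k}$. I would first establish, by a short induction on $k$ from the recursion $\delta_k=[\delta_{k-1},\delta_{k-1}]$, the chain of inclusions $G_{\delta_k}\subseteq G_{\delta_{k-1}}\subseteq\cdots\subseteq G_{\delta_1}$: indeed, if $u=[\alpha,\beta]\in G_{\delta_k}$ with $\alpha,\beta\in G_{\delta_{k-1}}$, then the inductive hypothesis gives $\alpha,\beta\in G_{\delta_{k-2}}$, so $u\in G_{\delta_{k-1}}$. Consequently $x\in G_{\delta_{k-1}}$, and writing $x=[\alpha,\beta]$ as a $\delta_k$-decomposition with $\alpha,\beta\in G_{\delta_{k-1}}$, the factorization $[g,x,x]=[[g,x],x]$ already has the outer factor $x$ as a $\delta_{k-1}$-value. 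For $k=2$ the inner commutator $[g,x]$ is itself a $\delta_1$-value and one obtains directly $[g,x,x]=\delta_2(g,x,\alpha,\beta)\in G_{\delta_2}$. For $k\geq 3$ the delicate step is to realize $[g,x]$ as a $\delta_{k-1}$-value, which I expect to be the main technical point of the argument and would presumably require commutator identities together with the nested structure of $\delta_k$ and the specific decomposition of $x$.
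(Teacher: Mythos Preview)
Your overall strategy matches the paper's exactly: the identity $[g,x,x]\cdot x^{-1}=x^{-[g,x]}$ together with $P(\delta_k,p)$ forces $[g,x,x]$ to have $p'$-order, once one knows that $[g,x,x]\in G_{\delta_k}$. The only issue is the last step, which you correctly isolate as the obstacle but do not resolve.

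Your proposed route for $k\geq 3$ --- showing that $[g,x]$ is a $\delta_{k-1}$-value --- will not work. The element $g$ is completely arbitrary, so $[g,x]$ is a priori only a $\delta_1$-value; there is no mechanism by which the ``nested structure of $\delta_k$'' or a decomposition of $x$ could promote $[g,x]$ to a $\delta_{k-1}$-value. For a concrete obstruction, take $k=3$ and $g$ outside $G'$: then $[g,x]$ need not be a commutator of two commutators.

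The paper sidesteps this entirely with a one-line commutator identity that removes $g$ from the picture. Writing $[g,x]=x^{-g}x$ and using $[ab,c]=[a,c]^b[b,c]$ gives
\[
[g,x,x]=[x^{-g}x,\,x]=[x^{-g},x]^{x}[x,x]=[x^{-g},x]^{x}.
\]
Now both entries of the commutator $[x^{-g},x]$ lie in $G_{\delta_{k-1}}$ (using the inclusion $G_{\delta_k}\subseteq G_{\delta_{k-1}}$ you already proved, together with closure under inversion and conjugation), so $[x^{-g},x]\in G_{\delta_k}$, and hence its conjugate $[g,x,x]$ is a $\delta_k$-value as well. With this in hand your contradiction argument goes through verbatim.
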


\begin{proof}
Let $g \in G$ be an element and let $x\in G_{\delta_k}$ be a $\delta_k$-value of $p'$-order. 
Note that 
\[ [g,x,x] = [x^{-g}, x]^{x} \]
is a $\delta_k$-value of $G$. Also,
\[ [g,x,x] \cdot x^{-1} = x^{-[g,x]}. \]
Now, $x^{-1}$ and $x^{-[g,x]}$ are $\delta_k$-values of $G$ of $p'$-order (their order is equal to the one of $x$). Since $G$ satisfies $P(\delta_k,p)$, we deduce that $[g,x,x]$ has $p'$-order.
\end{proof}

\begin{lemma}\label{p.sbg.word.delta} Let $G$ be a finite group satisfying $P(\delta_k,p)$. If $P$ is a $p$-subgroup of $G$ and $x\in N_G(P)$ is such that $x \in G_{\delta_k}$ has $p'$-order, then $[P,x] =1$.
\end{lemma}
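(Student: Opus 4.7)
The plan is to mirror the argument used in Lemma~\ref{p.sbg.word} for $\gamma_k$-values, substituting Lemma~\ref{p'.word.delta} for Lemma~\ref{p'.word}. Pick an arbitrary $g\in P$. By Lemma~\ref{p'.word.delta}, the element $[g,x,x]$ has $p'$-order. On the other hand, since $x\in N_G(P)$ normalizes $P$ and $g\in P$, the commutator $[g,x,x]$ lies in $P$, which is a $p$-group. The only element with both $p'$-order and $p$-power order is the identity, so $[g,x,x]=1$. As $g\in P$ was arbitrary, this gives $[P,x,x]=1$.

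To upgrade this to $[P,x]=1$, I would invoke coprime action. Since $x$ has $p'$-order and $P$ is a $p$-group, the cyclic group $\langle x\rangle$ acts coprimely on $P$ by conjugation. By \cite[Theorem 5.3.6]{GOR}, which is precisely the result used at the end of the proof of Lemma~\ref{p.sbg.word}, in this coprime setting $[P,x]=[P,_n x]$ for every $n\geq 1$. Applying it with $n=2$ yields $[P,x]=[P,x,x]=1$, as required.

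I do not expect a genuine obstacle: the conceptual work has already been carried out in Lemma~\ref{p'.word.delta}, which produces the key $2$-Engel condition $[g,x,x]$ of $p'$-order, and the coprime action reference handles the passage from $[P,_2 x]=1$ to $[P,x]=1$. The only minor subtlety compared to the $\gamma_k$-case is that there one produces a $(k-1)$-fold Engel identity $[g,_{k-1}x]=1$, whereas here Lemma~\ref{p'.word.delta} uniformly gives a $2$-Engel identity regardless of $k$, because the identity $[g,x,x]\cdot x^{-1}=x^{-[g,x]}$ used in its proof is independent of the depth of the derived word $\delta_k$. Thus the final step of the coprime action argument is if anything slightly cleaner than in the $\gamma_k$-case.
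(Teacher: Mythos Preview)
Your proof is correct and follows exactly the same route as the paper's own argument: apply Lemma~\ref{p'.word.delta} to get that $[g,x,x]$ has $p'$-order, note it lies in the $p$-group $P$ since $x$ normalizes $P$, conclude $[P,x,x]=1$, and then invoke \cite[Theorem 5.3.6]{GOR} for coprime action to deduce $[P,x]=1$. Your commentary on the $2$-Engel versus $(k-1)$-Engel distinction is accurate but not needed for the proof itself.
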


\begin{proof}
Let $g\in P$ be an element. By Lemma \ref{p'.word.delta} the element $[g,x,x]$ has $p'$-order. 
On the other hand, since $x \in N_G(P)$ we have that $[g,x,x] \in P$. Therefore the only possibility is $[g,x,x]= 1$. This shows that $[P,x,x] = 1$. Hence by \cite[Theorem 5.3.6]{GOR} we get $[P,x] = [P,x,x] = 1$.
\end{proof}

\begin{proof}[Proof of Theorem \ref{thm.delta.sol}]
If $G^{(k)}$ is $p$-nilpotent then $G$ satisfies property $P(\delta_k,p)$ by Corollary \ref{thm.intro}.

Suppose $G$ is soluble and satisfies $P(\delta_k,p)$ but is  minimal such that $G^{(k)}$ is not $p$-nilpotent. Note that every proper subgroup and every proper quotient of $G$ is soluble, so $G$ is a minimal $P(\delta_k,p)$-exception.
By Lemma \ref{sbg.and.quotient.general} the group $G'$ satisfies $P(\delta_k,p)$. Since $G$ is soluble, we have $G > G'$ and so $G^{(k+1)} =\delta_k(G')$ is $p$-nilpotent. In particular $G^{(k+1)}$ is contained in $Fit_p(G)$, that is a $p$-group by  Lemma \ref{Op'.word}. Let $P\in Syl_p(G^{(k)})$. Then  $G^{(k+1)} \leq P$ and 
\[[P, G^{(k)}] \leq [G^{(k)}, G^{(k)}] = G^{(k+1)} \leq P.\] 
Thus $P$ is normal in $G^{(k)}$.
By the Schur-Zassenhaus theorem \cite[Theorem 6.2.1]{GOR} there exists a subgroup $H$ of $G^{(k)}$ of $p'$-order such that $G^{(k)}=PH$. Let $q_1, q_2, \dots, q_n$ be all prime numbers dividing the order of $G^{(k)}$ and distinct from $p$, with $q_i \neq q_j$ for every $i\neq j$. Let $Q_i$ be a Sylow $q_i$-subgroup of $G$ and set $\hat{Q_i} = Q_i \cap G^{(k)} \in \Syl_{q_i}(G^{(k)})$. Since $G$ is soluble, by Lemma \ref{delta.focal} the group $\hat{Q_i}$ is generated by $\delta_k$-values of $G$ lying in $Q_i$.
Note that $H = \langle \hat{Q_1}, \dots, \hat{Q_n} \rangle$, so $H$ is generated by $\delta_k$-values of $G$ of $p'$-order. Since $P\norm G^{(k)}$, we deduce that $[P, H] = 1$ by Lemma \ref{p.sbg.word.delta}. Thus $H$ is a normal $p$-complement of $G^{(k)}$ and $G^{(k)}$ is $p$-nilpotent, a contradiction. This completes the proof.
\end{proof}

\bibliographystyle{amsplain}
\bibliography{books}
 
\end{document}